\theoremstyle{plain}
\newtheorem{thm}{Theorem}[section]
\newtheorem{cor}[thm]{Corollary}
\newtheorem{lem}[thm]{Lemma}
\newtheorem{prop}[thm]{Proposition}
\newtheorem{defn}{Definition}[section]
\newtheorem{rem}{Remark}[section]
\numberwithin{equation}{section}
\newcommand{\ZZ}{\mathbb Z}
\newcommand{\CC}{\mathbb C}
\newcommand{\PP}{\mathbb P}
\newcommand{\FF}{\mathbb F}
 \newcommand{\cO}{\mathcal O}
\begin{document}

\title{Elliptic Calabi--Yau threefolds over a del Pezzo
surface}
\author{Simon Rose}
\thanks{S. Rose was supported by a Jerry Marsden Postdoctoral Fellowship for the Fields major thematic program on Calabi-Yau Varieties: Arithmetic, Geometry and Physics from July to December 2013.}
\author{Noriko Yui}
\thanks{N. Yui was supported in part by the Natural Sciences and Engineering Research Council (NSERC) Discovery Grant.}
\address{Department of Mathematics and Statistics, Queen's University,
Kingston, Ontario Canada K7L 3N6}
\email{simon@mast.queensu.ca; yui@mast.queensu.ca}
\date{\today}
\subjclass[2000]{Primary ;14N10, 11F11, 14H52}
\keywords{del Pezzo surface, Calabi--Yau threefold, Modular Forms}  

\begin{abstract}
We consider certain elliptic threefolds over the projective
plane (more generally over certain rational
surfaces) with a section in Weierstrass normal form. In particular,
over a del Pezzo surface of degree $8$, these elliptic threefolds
are Calabi--Yau threefolds. We will discuss especially
the generating functions of Gromov-Witten and Gopakumar-Vafa
invariants. 
\end{abstract}

\maketitle

\section{Introduction} \label{sect1}
\medskip
During a visit to Max-Planck-Institut f\"ur Mathematik Bonn
in the spring of 2004, Professor Hirzebruch showed the second author a
specific construction of Calabi--Yau threefolds, which
are elliptic threefolds over a del Pezzo
surface of degree $8$ in Weierstrass normal form,
that is a family of elliptic curves over a del Pezzo surface
of degree $8$ (a rational surface) \cite{hir}. 
The purpose of this short note is to discuss
the generating functions of Gromov-Witten and
Gopakumar-Vafa invariants.
\medskip

This paper was completed while both authors were in residence at the Fields Institute for the thematic program. We thank the hospitality of the Fields Institute.

\section{A del Pezzo surface of degree $8$}
\label{sect2}
\medskip

First we will give a definition of a del Pezzo surface. A good
reference is Manin \cite{Ma86}.

\begin{defn}\label{defn2.1}
{\rm A {\it del Pezzo} surface $S$ is a smooth projective geometrically
irreducible surface whose anti-canonical bundle is ample,
i.e., $-K_S$ is ample.

The {\it degree} of $S$ is a positive integer defined by
\[
\operatorname{deg} S:= K_S \cdot K_S.
\]
That is, the degree of \(S\) is the self-intersection of its canonical class.}
\end{defn}
\medskip

\begin{rem}\label{rem2.1} {\rm (1)  Every del Pezzo surface is geometrically
rational. Therefore, it is birationally equivalent to the
projective plane, $\PP^2$.

(2) Let $S$ be a del Pezzo surface. Then $1\leq  \mbox{deg}\, S\leq 9$.

(3) If $\mbox{deg}\,S >2$, then its anti-canonical bundle $-K_S$ is very ample.}
\end{rem}
\medskip

Here is a classification results of del Pezzo surfaces according to
their degrees.
\medskip

\begin{thm}\label{thm2.1}{\sl Let $S$ be a del Pezzo surface.

(a) If $\mbox\,{deg} S=4$, $S$ is birationally equivalent to
a complete intersection of two quadrics in $\PP^4$.

(b) If $\mbox\,{deg} S = 3$, $S$ is birationally equivalent to
a cubic surface in $\PP^3$.

(c) If $\mbox\,{deg} S=2$, $S$ is birationally equivalent to a hypersurface
of degree $4$ in the weighted projective $2$-space $\PP(2,1,1,1)$.

(d) If $\mbox\,{deg} S=1$, $S$ is birationally equivalent to a hypersurface
of degree $6$ in the weighted projective $2$-space $\PP(3,2,1,1)$.

(e) Any smooth surface as in (a),(b),(c) or (d) is del Pezzo surface
of the expected degree.

(f) Let $P_1, P_2,\cdots, P_r$ with $r\leq 8$ be generic points
in $\PP^2$. Let $S:=\mbox{Br}_{P_1.\cdots, P_r}(\PP^2)$
be the blow-up of $\PP^2$ at $P_i,\, 1\leq i\leq r$.
Then $S$ is a del Pezzo surface of degree $9-r$.}
\end{thm}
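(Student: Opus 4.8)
The unifying tool is the anti-canonical linear system $|-K_S|$. First I would compute its dimension by Riemann--Roch on $S$: since $S$ is rational (Remark~\ref{rem2.1}(1)) one has $\chi(\mathcal{O}_S)=1$, so
\[
\chi(-K_S)=\chi(\mathcal{O}_S)+\tfrac12(-K_S)\cdot(-K_S-K_S)=1+K_S^2=1+\deg S .
\]
Writing $-K_S=K_S+(-2K_S)$ and noting that $-2K_S$ is ample, Kodaira vanishing kills $H^i(S,-K_S)$ for $i>0$, so $\dim|-K_S|=\deg S=:d$. The same method computes every graded piece $H^0(S,-mK_S)$. Thus the anti-canonical map carries $S$ into a space whose dimension is governed by $d$, and the self-intersection $(-K_S)^2=d$ fixes the degree of the image.

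For parts (a) and (b), where $d\geq 3$, Remark~\ref{rem2.1}(3) makes $-K_S$ very ample, so the anti-canonical map embeds $S$ as a surface of degree $d$ in $\PP^{d}$ (recall $\dim|-K_S|=d$). When $d=3$ this is a degree-$3$ surface in $\PP^3$, i.e.\ a cubic surface, giving (b). When $d=4$ it is a degree-$4$ surface in $\PP^4$; to extract the complete-intersection description I would count quadrics. Since $\chi(-2K_S)=1+3d=13$ and higher cohomology again vanishes, the restriction map $H^0(\PP^4,\mathcal{O}(2))\to H^0(S,-2K_S)$ has kernel of dimension at least $15-13=2$, so $S$ lies on two independent quadrics $Q_1,Q_2$; their complete intersection has degree $4$ and contains the degree-$4$ surface $S$, forcing $S=Q_1\cap Q_2$ and giving (a).

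Parts (c) and (d), where $d\leq 2$, are the most delicate, since $-K_S$ is no longer very ample and the target is a weighted projective space. Here I would study the anti-canonical ring $R=\bigoplus_{m\geq 0}H^0(S,-mK_S)$, again computing each piece by Riemann--Roch and Kodaira vanishing. For $d=2$, $|-K_S|$ defines a degree-$2$ map $S\to\PP^2$ branched over a quartic; the three weight-$1$ sections together with a weight-$2$ section $w$ realise $S$ as the double cover $w^2=f_4(x,y,z)$, a degree-$4$ hypersurface in $\PP(2,1,1,1)$, giving (c). For $d=1$, $|-K_S|$ is a pencil with a single base point, and tracking generators one finds $R$ produced in weights $1,1,2,3$ with a single sextic relation of Weierstrass type $w^2=z^3+\cdots$, realising $S$ as a degree-$6$ hypersurface in $\PP(3,2,1,1)$, giving (d). The principal obstacle in (c) and (d) is proving that $R$ is generated \emph{precisely} in the claimed weights with a single relation of the claimed degree; this is the genuine content and requires controlling the multiplication maps $\mathrm{Sym}^\bullet H^0(S,-K_S)\to R$.

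Finally, for (e) I would run adjunction in reverse: in each model $K_S=(K_{\PP}+S)|_S$ computes to $\mathcal{O}(-1)|_S$, so $-K_S$ is ample, and $K_S^2$ recovers the expected degree (for instance $4\cdot\tfrac12=2$ for the quartic in $\PP(2,1,1,1)$ and $6\cdot\tfrac16=1$ for the sextic in $\PP(3,2,1,1)$, using $\mathcal{O}(1)^3=1/\prod a_i$; the smoothness hypothesis keeps $S$ away from the quotient singularities). For (f), I would read off $K_S^2=9-r$ from $K_S=\pi^*K_{\PP^2}+\sum_i E_i$ with $E_i^2=-1$ and $E_i\cdot E_j=0$, and then prove ampleness of $-K_S=3H-\sum_i E_i$ via the Nakai--Moishezon criterion. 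Here $(-K_S)^2=9-r>0$ uses $r\leq 8$, while positivity of $-K_S\cdot C$ on every irreducible curve $C$ is exactly where the general position of $P_1,\dots,P_r$ enters: genericity must rule out the lines, conics, and cubics through too many of the points that would otherwise yield curves of non-positive anti-canonical degree. Establishing this curve-by-curve positivity is the main difficulty of part (f).
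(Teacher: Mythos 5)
The paper gives no proof of this theorem at all: it is quoted as a classical classification result with Manin \cite{Ma86} as the reference, so there is no internal argument to measure your proposal against. What you wrote is the standard textbook route, and it is sound in outline. The Riemann--Roch computation $\chi(-mK_S)=1+\tfrac{m(m+1)}{2}K_S^2$ together with Kodaira vanishing (applied to $-mK_S=K_S+(-(m+1)K_S)$ with $-(m+1)K_S$ ample) correctly gives $h^0(-K_S)=1+d$ and $h^0(-2K_S)=1+3d$; the anticanonical embedding for $d\geq 3$, the quadric count $15-13=2$ for $d=4$ (plus nondegeneracy of the image to ensure the two quadrics meet properly), the anticanonical double cover of $\PP^2$ branched in a quartic for $d=2$, the weighted sextic model for $d=1$, adjunction for (e), and Nakai--Moishezon plus general position for (f) are exactly the ingredients of the classical proof. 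Two steps remain genuinely incomplete, and to your credit you flag both: in (c) and (d) the assertion that the anticanonical ring is generated in weights $(1,1,1,2)$ resp.\ $(1,1,2,3)$ with a single relation of degree $4$ resp.\ $6$ is the real content and needs an analysis of the multiplication maps $\mathrm{Sym}^\bullet H^0(S,-K_S)\otimes\cdots\to R$; and in (f) the positivity $-K_S\cdot C>0$ on every irreducible curve requires enumerating the finitely many obstructing configurations (three collinear points, six on a conic, eight on a cubic singular at one of them), which is precisely what genericity must exclude. Since both gaps are fillable by standard arguments, your proposal is a correct outline of the classical proof rather than a complete one; note also that your argument actually yields isomorphism onto the anticanonical model, which is stronger than the birational equivalence the theorem asserts, so no ground is lost there.
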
 
\medskip

To obtain a del Pezzo surface of degree $8$, we blow-up
$\PP^2$ in one point.

\begin{cor}\label{cor2.2}{\sl
Pick a point $P\in\PP^2$
, and a line $H\subset \PP^2$ not passing through $P$. Then
\[
-K_{\PP^2}= 3H,\qquad\mbox{and}\qquad K_{\PP^2} \cdot K_{\PP^2} =9\,H^2=9.
\]
Let \(S := Bl_P(\PP^2)\) be the blow-up of \(\PP^2\) at \(P\). Furthermore, let $E$ denote the exceptional curve replacing $P$; then $E \cdot E=-1$. 
Let $\xi : S\to \PP^2$ be the blow-up map. Then
\[
K_S=\xi^*(K_{\PP^2})+E
\]
and
\begin{align*}
K_S \cdot K_S &=\xi^*(K_{\PP^2}) \cdot \xi^*(K_{\PP^2}) +2\xi^*(K_{\PP^2})\cdot E+ E\cdot E\\
&=K_{\PP^2} \cdot K_{\PP^2}+E \cdot E \\
&=9-1=8.
\end{align*}

Then $S$ is a del Pezzo surface of degree $8$.}
\end{cor}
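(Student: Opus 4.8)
The plan is to separate the purely intersection-theoretic bookkeeping, which is most of what the displayed equations already record, from the one genuinely substantive point: the ampleness of $-K_S$, which is what actually licenses the name ``del Pezzo.'' First I would settle the assertions about $\PP^2$ itself. Since $K_{\PP^2}=\cO_{\PP^2}(-3)$ we have $-K_{\PP^2}=3H$ for $H$ the class of a line, and as two distinct lines meet in a single point we have $H^2=1$; hence $K_{\PP^2}\cdot K_{\PP^2}=9H^2=9$.

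Next I would invoke the standard facts about a one-point blow-up $\xi:S\to\PP^2$, which do all the remaining computational work. The exceptional curve satisfies $E\cdot E=-1$; the projection formula gives $\xi^*D\cdot\xi^*D'=D\cdot D'$ for classes $D,D'$ on $\PP^2$; and $\xi^*D\cdot E=0$, since $E$ is contracted to a point and so is disjoint from the proper transform of a general member of $|D|$. Combined with the blow-up formula for the canonical class, $K_S=\xi^*(K_{\PP^2})+E$ (the discrepancy of a smooth point blow-up of a surface), these feed directly into the displayed expansion: the cross term $2\,\xi^*(K_{\PP^2})\cdot E$ vanishes, the leading term is $K_{\PP^2}\cdot K_{\PP^2}=9$, and $E\cdot E=-1$, giving $K_S\cdot K_S=8$.

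The step that is not mere bookkeeping---and the main obstacle---is checking that $-K_S$ is ample, since the degree computation alone does not establish that $S$ is del Pezzo. I would verify this by the Nakai--Moishezon criterion applied to $-K_S=3\,\xi^*H-E$. One needs $(-K_S)^2>0$, already known to equal $8$, together with $-K_S\cdot C>0$ for every irreducible curve $C$. For $C=E$ one computes $-K_S\cdot E=-E\cdot E=1>0$. For any other irreducible $C$, write $[C]=a\,\xi^*H-bE$; then $a=\deg\xi(C)\ge1$ and $b=E\cdot C\ge0$ is the multiplicity of $\xi(C)$ at $P$, which satisfies $b\le a$ by B\'ezout, so $-K_S\cdot C=3a-b\ge2a>0$. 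Thus $-K_S$ is ample and $S$ is del Pezzo of degree $8$. Alternatively, and more economically, one may simply apply Theorem~\ref{thm2.1}(f) with $r=1$---a single point being automatically in general position---which yields that $S$ is del Pezzo of degree $9-1=8$ and renders the explicit computation a consistency check.
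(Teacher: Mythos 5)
Your proposal is correct, and in fact more complete than what the paper itself offers. The paper gives no separate proof of this corollary: the intersection-theoretic computation is embedded in the statement, and the conclusion that $S$ is del Pezzo rests implicitly on Theorem~\ref{thm2.1}(f) with $r=1$ --- precisely the route you mention only as an afterthought. Your main line of argument is the genuinely different part: you verify the ampleness of $-K_S = 3\xi^*H - E$ directly via Nakai--Moishezon, which addresses the one real logical gap in the paper's presentation, since (as you rightly stress) the computation $K_S \cdot K_S = 8$ alone does not certify that $S$ is del Pezzo. Your verification checks out: $(-K_S)^2 = 8 > 0$; $-K_S \cdot E = -E\cdot E = 1 > 0$; and any other irreducible curve $C$ is the proper transform of its image, with class $a\,\xi^*H - bE$ where $a = \deg \xi(C) \geq 1$ and $b$ is the multiplicity of $\xi(C)$ at $P$, so that $0 \leq b \leq a$ and $-K_S \cdot C = 3a - b \geq 2a > 0$. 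What the paper's (implicit) route buys is brevity, since the classification statement carries all the weight --- note that for $r=1$ the genericity hypothesis is vacuous, as you observe. What your route buys is self-containedness: it establishes ampleness from first principles rather than from the classification of del Pezzo surfaces, at the modest cost of invoking the Nakai--Moishezon criterion and the elementary bound of the multiplicity of a plane curve at a point by its degree.
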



\begin{rem} 
Let $S$ be a del Pezzo surface of degree $d$. 
Then

(1) Every irreducible curve on $S$ is exceptional.

(2) If $S$ has no exceptional curves, then either $d=9$
and $S$ is isomorphic to $\PP^2$, or $d=8$ and $S$
is isomorphic to $\PP^1\times \PP^1$.

(3) If $S$ is not isomorphic to $\PP^1\times \PP^1$, then 
the Picard group $Pic(S)$ is isomorphic to $\ZZ^{10-d}$.
In particular, if $d=8$, $Pic(S)\simeq \ZZ^2$ and is spanned by $H$
and $E$.
\end{rem}

\section{The construction of elliptic threefolds over $S$}\label{sect3}
\medskip
Let $\pi : X\to S$ be an elliptic fibration, and let $L$ be a line
bundle on $S$ with $L \cdot L =8$. Take
$$g_2\in H^0(S,L^4),\,\mbox{and}\, g_3\in H^0(S,L^6),$$
i.e., $$g_2=4L\quad\mbox{and}\quad g_3=6L$$
and let
$$X : y^2z=4x^3-g_2xz^2-g_3z^3.$$
Then the canonial bundle $K_X$ is given by
$$K_X=\pi^*(K_{X/S}+K_S)\quad\mbox{with $K_{X/S}\simeq L^{-1}$}.$$
We want $X$ to be a Calabi--Yau threefold.  The Calabi--Yau
condition imposes that
$$K_X\simeq{\mathcal O}_X\Longleftrightarrow K_S=L^{-1}
\Longleftrightarrow -K_S=L.$$
Now
$$K_S=-3H+E\Longleftrightarrow L=3H-E$$
so that
\[
4L=4(3H-E)\qquad\mbox{and}\qquad 6L=6(3H-E).
\]
\medskip

Let $[z_0:z_1:z_2]$ be the projective coordinate for $\PP^2$. Then
$g_2=g_2(z_0,z_1,z_2)\in 4L$ and is of degree $12$. While
$g_3=g_3(z_0,z_1,z_2)\in 6L$ and is of degree $18$. Put
$\Delta=4g_2^3-27g_3^2$. Then $\Delta=\Delta(z_0,z_1,z_2)\in 12L$ and
is of degree $36$.
\medskip

\section{Calculation of the Euler characteristic and the Hodge numbers}
\label{sect4}

Let $X$ be an elliptic threefold constructed above. Then
by the construction, the geometric genus of $X$ is $p_g(X)=1$
and $h^{1,0}(X)=h^{2,0}(X)=0$. So $X$ is a Calabi--Yau
threefold. Now we calculate the Euler characteristic
$e(X)$ of $X$. 
\medskip

\begin{lem}\label{lem4.1}{\sl Let $Y$ be a complex surface (possibly
with singularities). Then the Euler characteristic $e(Y)(D)$ for
any divisor $D$ is given by
$$e(Y)(D)=K_Y\cdot D-D\cdot D+\mbox{Contribution from singularities}.$$
 In particular, if $Y$ is smooth,
$$e(Y)(D)=2-2g(Y)$$
which is independent of a choice of a divisor $D$.}
\end{lem}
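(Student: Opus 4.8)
The plan is to read $e(Y)(D)$ as the topological Euler characteristic $e(D)$ of the curve cut out by the divisor $D$, and to recognize the identity as the adjunction (genus) formula with an added term bookkeeping the singular points. The engine is adjunction in the smooth case: if $Y$ and $D$ are both smooth, the conormal sequence
\[
0 \to N_{D/Y}^{\vee} \to \Omega_Y|_D \to \Omega_D \to 0
\]
gives, on passing to top exterior powers, $\omega_D \simeq (K_Y \otimes \mathcal O_Y(D))|_D$. Taking degrees on the smooth projective curve $D$ yields $2g(D)-2 = (K_Y + D)\cdot D$, hence
\[
e(D) = 2 - 2g(D) = -(K_Y + D)\cdot D = -K_Y\cdot D - D\cdot D.
\]
Since $g(D)$ depends only on the class of $D$, the number $2-2g(D)$ is the same for every smooth member of a fixed linear system, which is the asserted independence.

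First I would record that the holomorphic version of this identity needs no smoothness of $D$: from $0 \to \mathcal O_Y(-D) \to \mathcal O_Y \to \mathcal O_D \to 0$ and Riemann--Roch on the surface one obtains the arithmetic-genus identity $2p_a(D) - 2 = (K_Y + D)\cdot D$ for any effective divisor. The second step is to convert the holomorphic invariant $p_a(D)$ into the topological $e(D)$ by passing to the normalization $\nu \colon \widetilde D \to D$. Two local comparisons enter: the geometric genus satisfies $p_a(D) = g(\widetilde D) + \sum_p \delta_p$, and the topological Euler characteristics satisfy $e(D) = e(\widetilde D) - \sum_p (r_p - 1)$, where $\delta_p$ is the delta-invariant and $r_p = \#\nu^{-1}(p)$ the number of branches at a singular point $p$. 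Substituting $e(\widetilde D) = 2 - 2g(\widetilde D)$ and the arithmetic-genus identity then expresses $e(D)$ as $-(K_Y+D)\cdot D + \sum_p\bigl(2\delta_p - (r_p-1)\bigr)$, the final sum being the ``contribution from singularities.'' To allow $Y$ itself to be singular I would resolve $\mu \colon \widetilde Y \to Y$, write $K_{\widetilde Y} = \mu^* K_Y + \sum a_i E_i$, and absorb the exceptional intersection terms into the same local error; all of this is supported on finitely many points.

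The step I expect to be the main obstacle is this conversion from the holomorphic to the topological invariant, i.e., getting the local contributions right without conflating arithmetic and geometric genus. For the case in view---the discriminant curve $\Delta \in |12L|$, whose singularities are ordinary cusps (where $g_2 = g_3 = 0$, with $r_p = 1$, $\delta_p = 1$) and nodes (with $r_p = 2$, $\delta_p = 1$)---the two corrections combine to $+2$ per cusp and $+1$ per node, and once these local numbers are fixed the global count assembles immediately.
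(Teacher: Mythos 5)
The paper never proves this lemma: it is stated as a known fact and then applied directly in the proof of Proposition \ref{prop4.1}, so there is no argument of the authors' to compare yours against. Judged on its own, your proof is correct and is the standard one: Riemann--Roch on $Y$ applied to $0 \to \cO_Y(-D) \to \cO_Y \to \cO_D \to 0$ gives $2p_a(D)-2=(K_Y+D)\cdot D$, and the passage to the normalization via $p_a(D)=g(\widetilde D)+\sum_p \delta_p$ and $e(D)=e(\widetilde D)-\sum_p(r_p-1)$ correctly isolates the local term $\sum_p\bigl(2\delta_p-(r_p-1)\bigr)$ as the ``contribution from singularities.'' Two remarks. First, your derivation in fact corrects the statement as printed: the formula should read $e(Y)(D)=-K_Y\cdot D-D\cdot D+(\text{singular contributions})$, i.e.\ $-(K_Y+D)\cdot D$, not $K_Y\cdot D-D\cdot D$, and ``$2-2g(Y)$'' should be $2-2g(D)$; that your signed version is the intended one is confirmed by how the lemma is used in Proposition \ref{prop4.1}, where $e(\Delta)=-\deg K_\Delta + 2\#\text{cusps} = -1056+384=-672$. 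Your local values ($+2$ per cusp since $\delta_p=1$, $r_p=1$; $+1$ per node since $\delta_p=1$, $r_p=2$) are exactly what that computation needs. Second, a small caution on your closing sentence: for generic $g_2, g_3$ the curve $\{\Delta=0\}$ has \emph{only} cusps and no nodes (a node of $\Delta$ would impose the extra codimension-two condition $4x_0\,dg_2+dg_3=0$ along $\Delta$, which is generically empty); the ``nodes'' appearing in Proposition \ref{prop4.1} refer to the nodal $I_1$ fibres of the elliptic fibration over smooth points of $\Delta$, not to singularities of $\Delta$ itself. This is consistent with the paper's final count $e(X)=-672+192=-480$, which would be off by $+2$ per node of $\Delta$ if any were present.
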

\medskip

\begin{prop}\label{prop4.1}{\sl Let $X: y^2z=4x^3-g_2xz^2-g_3z^3$
be a Calabi--Yau threefold over a del Pezzo surface $S$,
and let $\Delta=4g_2^3-27g_3^2$. Then
the Euler characteristic $e(X)$ of $X$ is given by the
formula
$$e(X)=e(\Delta)+\#\mbox{cusps}$$
where the Euler characteristic
$e(\Delta)$ of $\{\Delta=0\}$
is given by 
$$e(\Delta)=-\mbox{deg} K_{\Delta}=2-2g(\Delta)+2\#\mbox{cusps}$$
where $g(\Delta)$ denotes the genus of $\{\Delta=0\}$.

Moreover, we can compute that
$$\#\mbox{cusps}=192\quad\mbox{and}\quad g(\Delta)=595.$$
Finally, we obtain
$$e(X)=-480.$$}
\end{prop}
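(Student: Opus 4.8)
The plan is to read off $e(X)$ directly from the elliptic fibration $\pi\colon X\to S$, using that the topological Euler characteristic is additive over a finite stratification of the base and multiplicative on locally trivial pieces, so that $e(X)=\sum_{T}e(T)\,e(F_T)$, where $T$ runs over the strata of $S$ on which the fibre type is constant and $F_T$ is a fibre over $T$. First I would stratify the base as $S=U\sqcup\Delta^{\circ}\sqcup\Sigma$, where $U=S\setminus\{\Delta=0\}$ is the locus of smooth elliptic fibres, $\Sigma=\{g_2=g_3=0\}$ is the finite set of cusps of the discriminant, and $\Delta^{\circ}=\{\Delta=0\}\setminus\Sigma$.

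Next I would identify the fibre over each stratum. Over $U$ the fibre is a smooth genus-one curve, hence a real two-torus with $e=0$, so $\pi^{-1}(U)$ contributes nothing. Over a point of $\Delta^{\circ}$ one has $\Delta=0$ with $g_2\neq 0$, so the Weierstrass cubic has a single node; a nodal cubic is $\PP^1$ with two points identified and has $e=1$. Over a point of $\Sigma$ both $g_2$ and $g_3$ vanish and the cubic acquires a cusp; a cuspidal cubic is the homeomorphic image of $\PP^1$ and has $e=2$. Multiplicativity then yields
\[
e(X)=0+1\cdot e(\Delta^{\circ})+2\cdot\#\mbox{cusps}=\big(e(\Delta)-\#\mbox{cusps}\big)+2\,\#\mbox{cusps}=e(\Delta)+\#\mbox{cusps},
\]
which is the first displayed formula.

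It remains to evaluate the two inputs. The cusps are cut out by $g_2=g_3=0$, so $\#\mbox{cusps}=(4L)\cdot(6L)=24\,L^2=24\cdot 8=192$. For $e(\Delta)$ I would apply adjunction (Lemma~\ref{lem4.1}) to $\Delta\in|12L|$: since $L=-K_S$ we get $2\,g(\Delta)-2=(K_S+\Delta)\cdot\Delta=(11L)\cdot(12L)=132\,L^2$, where $g(\Delta)$ denotes the arithmetic genus. Because the only singularities of $\Delta$ are the $192$ ordinary cusps, each unibranch with $\delta$-invariant $1$, and because the normalization map is a local homeomorphism at a cusp, the topological Euler characteristic of the singular curve coincides with that of its smooth model, giving $e(\Delta)=2-2\,g(\Delta)+2\,\#\mbox{cusps}$, the second displayed formula. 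Combining the two and substituting $L^2=8$,
\[
e(X)=e(\Delta)+\#\mbox{cusps}=2-2\,g(\Delta)+3\,\#\mbox{cusps}=-132\,L^2+72\,L^2=-60\,L^2=-480 .
\]

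The intersection numbers here are routine; the genuine content is the fibre analysis. The step I expect to be most delicate is verifying that $\Delta$ has no singularities beyond the ordinary cusps along $\{g_2=g_3=0\}$ — in particular no nodes — since otherwise the clean relation among $e(\Delta)$, the genus and $\#\mbox{cusps}$ would acquire extra correction terms, and the bookkeeping of which fibres carry $e=1$ versus $e=2$ would change. I would settle this by regarding $(g_2,g_3)$ as a section of $L^{4}\oplus L^{6}$ and checking that, for generic Weierstrass data, it meets the standard cuspidal cubic $\{4a^3-27b^2=0\}$ transversally away from the origin while cutting out $\{g_2=g_3=0\}$ transversally, so that $\Delta$ acquires exactly one ordinary cusp at each of the $192$ points and is smooth elsewhere.
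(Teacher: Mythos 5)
Your proof is correct and takes essentially the same route as the paper: stratify $X$ by fibre type over $S\setminus\Delta$, the smooth locus of $\Delta$, and the cusps (fibres of Euler characteristic $0$, $1$, $2$ respectively), conclude $e(X)=e(\Delta)+\#\mbox{cusps}$ by additivity and multiplicativity, then evaluate $\#\mbox{cusps}=4L\cdot 6L=192$ and $e(\Delta)$ by adjunction. Two points of comparison are worth recording. First, your genus bookkeeping is the internally consistent one: with $g(\Delta)$ the arithmetic genus of $\Delta\subset S$, adjunction gives $2g(\Delta)-2=11L\cdot 12L=1056$, so $g(\Delta)=529$ and $e(\Delta)=2-2g(\Delta)+2\#\mbox{cusps}=-672$; the paper's figure $g(\Delta)=595$ is instead the arithmetic genus of the degree-$36$ plane image of $\Delta$ in $\PP^2$ (which acquires an extra multiplicity-$12$ point at the blown-up point, of $\delta$-invariant $66$, and $595-66=529$), and its displayed chain $e(\Delta)=-\deg K_\Delta=2-2g(\Delta)+2\#\mbox{cusps}$ cannot hold literally since $-\deg K_\Delta=-1056\neq -672$; what the paper's proof actually uses is exactly your formula, and your fibre identification (nodal and cuspidal cubics with $e=1,2$) is also what the paper means by its loosely worded ``resolutions.'' Second, the genericity issue you flag at the end --- that $\Delta$ has no singularities beyond the $192$ ordinary cusps --- is precisely the point the paper passes over in silence; your transversality argument for generic $(g_2,g_3)$, or alternatively the agreement of the outcome with the standard formula $e(X)=-60\,K_S\cdot K_S=-480$, is what justifies the clean bookkeeping.
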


\begin{proof}
First recall that $L=-K_S$ and that $L \cdot L=8$.  Then we have 
$$K_{\Delta}=(K_S+\Delta)\cdot \Delta=(-L+12L)\cdot 12L$$
$$ =11L\cdot 12L=(11 \cdot 12)( L\cdot L)=11\cdot 12\cdot 8=1056.$$
The number of cusps is given by
$$4L\cdot 6L= 24( L \cdot L)=24\cdot 8=192.$$
Then
$$e(\Delta)=-1056+2\cdot 192=-1056+384=-672.$$

Now we need to calculate the Euler characteristic of resolutions
of singularities.  If $\{\Delta=0\}$ is smooth, its resolution is 
an elliptic curve $E$, and the Euler characteristic
$e(E)=0$. If $\{\Delta=0\}$ is a node, the Euler characteristic
of its resolution is $1$, and if $\{\Delta=0\}$ is a cusp,
the Euler characteristic of its resolution is $2$.  

Then we have
$$e(X)=\begin{pmatrix} e(E)\times e(\PP^2\setminus\Delta) \\
         +e(\Delta\setminus\{cusps\})\times
e(\mbox{resolution of a node}) \\
+\#\mbox{cusps}\times e(\mbox{resolution
of a cusp})\end{pmatrix}$$
$$
=e(\Delta)-\#\mbox{cusps}+2\#\mbox{cusps}=e(\Delta)+\#\mbox{cusps}.$$

Finally we obtain
$$e(X)=-672+192=-480.$$
\end{proof}

The Hodge nubmers $h^{1,1}(X)$ and $h^{2,1}(X)$ have
been calculated by Hulek and Kloosterman \cite{HK11} (Section 11).
This is done by calculating the Mordell--Weil rank of the
elliptic curve $\pi : X\to S$, which turns out to be $0$.

\begin{lem} {\sl 
$$h^{1,1}(X)=3,\quad\mbox{and}\quad h^{2,1}(X)=243.$$
The topological Euler characteristic is $e(X)=-480$. }
\end{lem}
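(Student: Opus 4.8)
The plan is to pin down the two independent Hodge numbers of the Calabi--Yau threefold $X$ by combining two genuinely different inputs: the topological Euler characteristic, already computed in Proposition~\ref{prop4.1}, and the Picard number of $X$, which I will read off from the Mordell--Weil rank of the fibration $\pi:X\to S$.

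First I would record the Hodge diamond. Since $X$ is a Calabi--Yau threefold, $h^{1,0}=h^{2,0}=0$ and $h^{3,0}=1$, and Serre duality together with Hodge symmetry collapses every entry to the two unknowns $h^{1,1}$ and $h^{2,1}$. Forming the alternating sum $\sum_{p,q}(-1)^{p+q}h^{p,q}$ then gives the standard identity
$$e(X)=2\bigl(h^{1,1}(X)-h^{2,1}(X)\bigr).$$
Substituting $e(X)=-480$ from Proposition~\ref{prop4.1} yields the single linear relation $h^{1,1}(X)-h^{2,1}(X)=-240$, which is not yet enough to determine either number and must be supplemented by an independent computation of $h^{1,1}$.

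Next I would compute $h^{1,1}(X)$ on its own. Because $X$ is Calabi--Yau we have $h^{2,0}(X)=0$, so $h^{1,1}(X)$ coincides with the Picard number $\rho(X)$. For the smooth model of a Weierstrass elliptic threefold $\pi:X\to S$ with a section, a Shioda--Tate-type decomposition of the Néron--Severi group gives
$$\rho(X)=\rho(S)+1+\rank MW(X/S)+\sum_v(m_v-1),$$
where $v$ ranges over the components of the discriminant locus and $m_v$ is the number of irreducible components of the corresponding fibre. The three ingredients I would supply here are: $\rho(S)=2$ for the del Pezzo surface of degree $8$, since its Picard group is $\ZZ\langle H,E\rangle$ by the Remark following Corollary~\ref{cor2.2}; the contribution $+1$ from the fibre class together with the zero section; and the vanishing of the final sum. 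The last point follows because the generic Weierstrass degenerations along $\{\Delta=0\}$ are nodal fibres of type $I_1$ over the smooth part of the discriminant and cuspidal fibres of type $II$ at the $192$ cusps, all of which are irreducible, so each $m_v=1$; one also checks that the small resolution of the isolated singularities over the cusps introduces no new divisor classes.

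Granting that $\rank MW(X/S)=0$, which is the content of the Hulek--Kloosterman computation cited in~\cite{HK11}, the formula yields $h^{1,1}(X)=2+1+0=3$, and the Euler-characteristic relation then forces $h^{2,1}(X)=h^{1,1}(X)+240=243$. The main obstacle is precisely the Mordell--Weil rank: establishing $\rank MW(X/S)=0$ requires a genuine arithmetic-geometric analysis of the sections of the Weierstrass fibration rather than any formal manipulation, and this is the step I would import wholesale from~\cite{HK11}. The subsidiary verification that all singular fibres are irreducible, while elementary, is the secondary point one must confirm to be certain that no reducible-fibre components inflate $\rho(X)$ beyond the predicted value.
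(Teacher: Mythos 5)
Your proposal is correct and is essentially the paper's own route: the paper likewise reduces everything to the Hulek--Kloosterman computation \cite{HK11} that the Mordell--Weil rank of $\pi : X \to S$ is zero (your Shioda--Tate/Wazir bookkeeping, with $\rho(S)=2$ and all fibres irreducible, is exactly what that citation encapsulates, giving $h^{1,1}(X)=2+1+0=3$), and then pins down $h^{2,1}(X)=243$ by combining this with $e(X)=-480$ from Proposition \ref{prop4.1} via the Calabi--Yau relation $e(X)=2\bigl(h^{1,1}(X)-h^{2,1}(X)\bigr)$. One small inaccuracy that does not affect the argument: for generic $g_2, g_3$ the Weierstrass model $X$ is already smooth --- over the $192$ cusps of $\Delta$ the fibres are irreducible cuspidal cubics but the total space is nonsingular there --- so there are no isolated singularities requiring a small resolution, and that final check in your proof is vacuous.
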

Thus the Hodge diamond is given by

$$
\begin{array}{c@{}c@{}c@{}c@{}c@{}c@{}c@{}cl}
  &   &       &          1 &            &   &   & \qquad\qquad &
 B_0(X) = 1\\
  &   & 0          &            &    0 &   &   & & B_1(X) = 0\\

  & 0 &            & 3 &   & 0 &   & & B_2(X) = 3\\
1\phantom{1} &   & 243 &            & 243 &   & \phantom{1}1 & & B_3(X) =
 488\\
  & 0 &            & 3 &            & 0 &   & & B_4(X) = 3\\
  &   & 0          &            &          0 &   &   & & B_5(X) = 0\\
  &   &            &          1 &       &   &   & & B_6(X) = 1
\end{array}
$$

Recall $X$ is defined by a Weierstrass equation over the
del Pezzo surface $S$ of degree $8$ which is birational to $\PP^2$,
$$y^2z=4x^3-g_2xz^2-g_3z^3\,\quad\mbox{where } g_2, g_3\in \CC(S)$$
the $j$-invariant of $X$ is defined by
$$j=1728\displaystyle\frac{g_2^3}{\Delta}\quad\mbox{where}\quad
\Delta=4g_2^3-27g_3^2.$$

As for these elliptic threefolds, we have

\begin{lem} {\sl The $j$-invariant is a moduli for $X$.}
\end{lem}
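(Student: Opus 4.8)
The plan is to reduce the statement to the classical classification of elliptic curves by their $j$-invariant, applied to the generic fiber of $\pi\colon X\to S$. Write $K=\CC(S)$ for the function field of the base. The generic fiber is the plane cubic
\[
E\colon\quad y^2z=4x^3-g_2xz^2-g_3z^3,\qquad g_2,g_3\in K,
\]
which, since $\Delta=4g_2^3-27g_3^2$ is not identically zero, is a smooth curve of genus one carrying the distinguished $K$-rational point $[0:1:0]$, hence an elliptic curve over $K$. As $X$ is presented in Weierstrass normal form, it is the relatively minimal Weierstrass model of $E$ and so is recovered from its generic fiber together with the section $[0:1:0]$; thus it suffices to show that the $K$-isomorphism class of $E$ is captured by $j=1728\,g_2^3/\Delta$, viewed as a rational map $S\to\PP^1$ onto the $j$-line.

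First I would record the two halves of the classical picture in characteristic zero. On the one hand, every isomorphism of Weierstrass cubics fixing the point at infinity is a scaling $x\mapsto u^2x$, $y\mapsto u^3y$ with $u\in K^{*}$, which sends $(g_2,g_3)\mapsto(u^4g_2,u^6g_3)$ and leaves $j$ unchanged, so $j$ is constant on $K$-isomorphism classes. On the other hand, a short computation with $j=1728\,g_2^3/\Delta$ shows that $j$ determines the ratio $g_3^2/g_2^3$, so two pairs $(g_2,g_3)$ and $(g_2',g_3')$ give the \emph{same} $j$ precisely when $g_2'=t^2g_2$ and $g_3'=t^3g_3$ for some $t\in K^{*}$, i.e.\ exactly when $E$ and $E'$ are quadratic twists of one another. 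I would also check that $j$ is genuinely non-constant for the sections at hand: were it constant, $g_2^3$ would be a constant multiple of $\Delta$, forcing a global proportionality $g_3^2\sim g_2^3$ that fails for generic sections of the bundles $4L$ and $6L$ exhibited in Section~\ref{sect3}; hence the fibration is non-isotrivial and $j$ really varies over $S$.

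The main obstacle is therefore the twisting ambiguity: over the non-closed field $K$, equal $j$ pins down $E$ only up to the quadratic twist $t\in K^{*}$, so a priori $j$ is merely a \emph{coarse} invariant. To remove this I would use that $S$ is proper and connected, whence its global units are the constants, $H^0(S,\cO_S^{*})=\CC^{*}$, and that the Calabi--Yau condition fixes the fundamental line bundle to $L=-K_S$. For a twist $(g_2',g_3')=(t^2g_2,t^3g_3)$ to again consist of global sections of the \emph{same} bundles $L^4$ and $L^6$, the zeros and poles of $t$ would have to be absorbed by the fixed vanishing loci of $g_2$ and $g_3$; for a general fibration in the family this forces the divisor of $t$ to vanish, so $t\in\CC^{*}$. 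Over $\CC$ every constant is a square, so such a twist is already the scaling by $u=\sqrt{t}$ and produces an isomorphic fibration. Thus, once $L=-K_S$ is fixed, the only ambiguity compatible with a given $j$ is the harmless global scaling, and the rational map $s\mapsto j(s)$ classifies the fibers of $\pi$ up to isomorphism — which is exactly the assertion that $j$ is a modulus for $X$.
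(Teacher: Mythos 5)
The first thing to say is that the paper itself offers \emph{no} proof of this lemma: it is stated bare, immediately after the definition of $j=1728\,g_2^3/\Delta$, with no argument and no reference. So there is no ``paper's route'' to compare yours against; what you have written supplies something the authors omitted entirely. Your reading of the (admittedly vague) statement --- that the $j$-map determines the fibration up to isomorphism within the Weierstrass family over $S$ --- is the natural one, and your argument for it is essentially correct: reduce to the generic fibre $E$ over $K=\CC(S)$, use the classical fact that in characteristic zero equal $j$ pins down a Weierstrass cubic exactly up to quadratic twist, and then kill the twisting ambiguity by rigidity, i.e.\ properness of $S$ (so $H^0(S,\cO_S^*)=\CC^*$) together with the requirement that a twist $(t^2g_2,t^3g_3)$ remain inside $H^0(S,L^4)\times H^0(S,L^6)$ with $L=-K_S$ fixed by the Calabi--Yau condition.

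Two steps deserve tightening. First, the claim that the divisor of $t$ must vanish is where the real content sits, and as phrased it leans on an unexamined genericity clause. The clean version: write $\operatorname{div}(t)=D_+-D_-$ with $D_\pm\geq 0$ sharing no component; regularity of $t^2g_2$ forces $2D_-\leq Z(g_2)$, and if $Z(g_2)$ is reduced (which holds for generic $g_2$ by Bertini, since $-K_S$ is very ample on the degree-$8$ del Pezzo) this gives $D_-=0$; then $\operatorname{div}(t)$ is effective and principal on a projective surface, hence zero, so $t\in\CC^*$, and over $\CC$ every constant is a square. So the genericity proviso is not cosmetic --- for special $(g_2,g_3)$ with non-reduced common vanishing there can be honest non-constant twists in the family --- and it should be stated as a hypothesis rather than an aside. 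Second, the dichotomy ``same $j$ iff quadratic twist'' presupposes $g_2\neq 0\neq g_3$ in $K$ (i.e.\ $j$ not identically $0$ or $1728$); your non-isotriviality check disposes of this, but the logical dependence of the one step on the other should be made explicit. With those two repairs your proof is complete, and it is a sensible substitute for the justification the paper never gives.
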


We are also interested in the modularity question for the Galois representation associated to \(X\). However, the Betti number \(B_3(X) = 488\) is too large to make this practical. Thus, we are interested in constructing a topological mirror Calabi--Yau threefold
$\check X$.

For a topological mirror partner $\check X$ of our elliptic Calabi--Yau threefold
$X$, the Hodge numbers are
$$h^{1,1}(\check X)=243,\, h^{2,1}(\check X)=3$$
and the Euler characteristic is
$$e(\check X)=480.$$
The Betti numbers are
$$B_2(\check X)=243,\, B_3(\check X)=8.$$

In this case, the modularity of the Galois representation may at least somewhat be tractable. This leads us to ask: How can we construct such a mirror Calabi--Yau threefold?

\section{Gromov-Witten and Gopakumar-Vafa invariants}

We are naturally interested in the Gromov-Witten invariants of the threefold \(X\). These are obtained via integration against the virtual fundamental class of the moduli space of stable maps into \(X\). That is, we define
\[
N_{g,\beta}^X = \int_{[M_{g,n}(X,\beta)]^{vir}}1.
\]
In the best of cases, these invariants are positive integers and count the number of curves in \(X\) in the homology class \(\beta\). In many cases, however, since \(M_{g,n}(X,\beta)\) is a stack, the invariants are only rational numbers.

Naturally, we organize these invariants into a generating function as follows. Let \(F_g^X(q)\) and \(F^X(q,\lambda)\) be defined as
\begin{gather*}
F_g^X(q) = \sum_{\beta \in H_2(X)} N_{g,\beta}^X q^\beta \\
F^X(q) = \sum_{g=0}^\infty \lambda^{2g-2}F_g(q).
\end{gather*}

We can now define the {\em Gopakumar-Vafa/BPS} invariants via the equality
\[
F^X(q) = \sum_{g=0}^\infty \sum_{\beta \in H_2(X)} n_{g,\beta}^X \sum_{m=1}^\infty \frac{1}{k}\big(2\sin(\frac{k\lambda}{2})\big)^{2g-2}q^{k\beta}.
\]
For example, the \(g=0\) portion of this reads
\[
N_{0,\beta}^X = \sum_{\substack{\eta \in H_2(X)\\ k\eta = \beta}} \frac{1}{k^3}n_{0,\eta}^X.
\]
These invariants \(n_{g,\beta}^X\) are defined recursively in terms of the Gromov-Witten invariants \(N_{g,\beta}^X\), and {\em a priori} these are only rational numbers. It is a conjecture (see \cite{GV, Ka06}) that they are integers for all \(X, g, \beta\). We work with them because in the case of the Calabi-Yau threefold \(X\), the formul\ae\ for them turn out to be much simpler; the Gromov-Witten invariants can then be reconstructed from them.

In the case that a class \(\beta\) is primitive, the invariants \(N_{0,\beta}^X\) and \(n_{0,\beta}^X\) coincide.

\section{The geometry of \(X\)}

In order to compute these invariants, we need a bit more of a description of the geometry of the 3-fold \(X\). We begin with the following fact. The del Pezzo surface \(S\) is in fact isomorphic to the Hirzebruch surface \(\FF_1 = \PP\big(\cO \oplus \cO(1)\big)\). This is a \(\PP^1\) bundle over the base \(\PP^1\), with a \((-1)\)-curve as a section. Let \(C', F'\) denote the homology classes in \(S\) of the section and fibre, respectively. 

Consider now the following composition
\[
\xymatrix{
X \ar[r]^{p_1} \ar@/_1pc/[rr]_{\pi} & S \ar[r]^{p_2} & C.
}
\]
The generic fibre of this is an elliptically fibred K3 surface with 24 \(I_1\) fibres.

Let now \(X_F\) denote one such generic fibre, and let \(X_C\) denote the restriction of \(X\) to the section \(C\). This latter surface is a rational elliptic surface with 12 \(I_1\) fibres (which physicists call a \(\tfrac{1}{2}\)K3). Similarly, let \(C'', E''\) denote the class of the section and fibre in \(X_C\), respectively.

We want to have a description of the Picard group and a basis of \(H_2(X,\ZZ)\). So consider first the line bundles
\[
L_1 = \cO(S) \qquad L_2 = \cO(X_C) \qquad L_3 = \cO(X_F)
\]
and let \(\iota_1, \iota_2, \iota_3\) denote the respective inclusions of \(S, X_C, X_F\). We now define the homology classes
\begin{gather*}
C = (\iota_1)_*(C') = (\iota_2)_*(C'') \\
E = (\iota_2)_*(E'') \\
F = (\iota_1)_*(F').
\end{gather*}

\begin{lem}
The line bundles \(L_1, L_2, L_3\) form a basis of the Picard group of \(X\), and the classes \(C, E, F\) form a basis of \(H_2(X, \ZZ)\) (which are all effective).
\end{lem}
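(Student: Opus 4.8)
The plan is to prove both basis statements simultaneously by exhibiting a unimodular intersection matrix, and to dispose of effectivity directly. The classes $C$, $E$, $F$ are effective because each is the proper pushforward of an honest effective curve: the section $C'$ and the fibre $F'$ of the ruling $S\to C$, and the elliptic fibre $E''$ of the rational elliptic surface $X_C$. Hence there is nothing further to do for effectivity, and the content of the lemma is the two basis assertions.

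For these, recall that $h^{1,1}(X)=3=b_2(X)=b_4(X)$, and that since $h^{1,0}(X)=h^{2,0}(X)=0$ the Lefschetz $(1,1)$-theorem gives $\mathrm{Pic}(X)\otimes\QQ=H^2(X,\QQ)$, all of type $(1,1)$. On the smooth projective threefold $X$, Poincaré duality makes the cup product $H^2(X,\ZZ)\times H^4(X,\ZZ)\to H^6(X,\ZZ)=\ZZ$ unimodular on the free quotients, and under $H^4\cong H_2$ this is exactly the divisor--curve intersection $(L,\gamma)\mapsto\deg(L|_\gamma)$. Writing $P\subseteq\mathrm{Pic}(X)$ and $Q\subseteq H_2(X,\ZZ)$ for the sublattices spanned by $\{L_i\}$ and $\{C,E,F\}$, the determinant $\det(L_i\cdot\gamma_j)$ therefore equals $\pm$ the product of the indices $[\mathrm{Pic}:P]$ and $[H_2:Q]$. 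A determinant of $\pm1$ forces both indices to be $1$, which is precisely the two basis claims.

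The main computation is thus the $3\times3$ matrix $(L_i\cdot\gamma_j)$. For the two pullback classes I would invoke the projection formula: writing $L_2=p_1^*\cO_S(C')$ and $L_3=p_1^*\cO_S(F')$, each pairing reduces to an intersection in $S\cong\FF_1$ of $C'$ or $F'$ against $(p_1)_*$ of the curve, which is $C'$, $0$, $F'$ for $C,E,F$ respectively; this yields the bottom two rows from $C'^2=-1$, $C'\cdot F'=1$, $F'^2=0$, and from the fact that $E$ is $p_1$-contracted. For $L_1=\cO(S)$ the one nontrivial input is the normal bundle of the zero-section, which for a Weierstrass fibration is $N_{S/X}=\cO_X(S)|_S=-L=K_S$; hence $L_1\cdot C=K_S\cdot C'=-1$ and $L_1\cdot F=K_S\cdot F'=-2$ by adjunction on $S$, while $L_1\cdot E=1$ since the section meets each elliptic fibre once. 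The resulting matrix
\[
\begin{pmatrix} -1 & 1 & -2 \\ -1 & 0 & 1 \\ 1 & 0 & 0 \end{pmatrix}
\]
(rows $L_1,L_2,L_3$; columns $C,E,F$) has determinant $1$.

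The steps requiring care are the normal-bundle computation for $L_1$ — the one place where the Weierstrass structure, rather than just the two $\PP^1$-fibrations, enters, via $N_{S/X}=K_S$ — and the passage from the free quotients to the full groups. For the latter I would note that $X$ is simply connected, since the base $S$ is rational and $\pi$ admits a section; this makes $H^2(X,\ZZ)$ torsion-free and identifies $\mathrm{Pic}(X)$ with $H^2(X,\ZZ)$, and the analogous torsion-freeness of $H_2(X,\ZZ)$ upgrades the unimodularity conclusion to genuine $\ZZ$-bases. The hardest part is really just fixing $N_{S/X}$ with the correct sign.
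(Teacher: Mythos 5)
Your proof is correct and is essentially the paper's own argument: the paper likewise computes the $3\times 3$ intersection matrix of $L_1,L_2,L_3$ against $C,F,E$ (identical to yours up to column ordering, with determinant $-1$) and concludes from $h^{1,1}=3$ and unimodularity that the spanned lattices are the full ones. Your extra details --- the projection-formula and normal-bundle ($N_{S/X}=K_S$) derivation of the matrix entries, and the Poincar\'e-duality index argument turning $\det=\pm 1$ into the two basis claims --- are precisely the steps the paper leaves implicit.
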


\begin{proof}
We can compute the intersection pairing of these bundles with these curves, which we find to be
\begin{center}
\begin{tabular}{c|c c c}
& $C$ & $F$ & $E$ \\
\hline
$L_1$ & -1& -2& 1\\
$L_2$ & -1& 1& 0\\
$L_3$ & 1& 0& 0\\
\end{tabular}
\end{center}
which clearly has determinant -1. It follows (since \(h^{1,1} = 3\)) that the lattices that these generate must be the whole lattice.
\end{proof}

We will further need the triple intersections \(\Gamma_{ijk} = \int_X L_i \smile L_j \smile L_k\), which are computed as follows.

\begin{lem}\label{lem_triple_intersections}
The triple intersections are given by the following.
\begin{align*}
\Gamma_{111} &= 8  & \Gamma_{112} &= -1& \Gamma_{113} = -2\\
\Gamma_{122} &= -1& \Gamma_{123} &= 1 & \Gamma_{133} = 0\\
\Gamma_{222} &= 0& \Gamma_{223} &= 0& \Gamma_{233} = 0\\
&&\Gamma_{333} &=0
\end{align*}
\end{lem}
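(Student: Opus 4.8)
The plan is to reduce every triple intersection $\Gamma_{ijk}=\int_X L_i\smile L_j\smile L_k$ to the pairing of a \emph{curve} class in $H_2(X,\ZZ)$ against one of the divisors, which can then be read straight off the intersection table of the previous lemma. Writing $D_1=S$, $D_2=X_C$, $D_3=X_F$ for the three divisors (so $L_i=\cO(D_i)$), the key point is that for $i\neq j$ the product $L_i\smile L_j$ is Poincar\'e dual to the intersection curve $D_i\cap D_j$, whence by the projection formula $\Gamma_{ijk}=(D_i\cap D_j)\cdot L_k$; and for $i=j$ the self-intersection formula gives $L_i\smile L_i=(\iota_i)_*\,c_1(N_{D_i/X})$, so $\Gamma_{iii}=\int_{D_i}(N_{D_i/X})^2$.

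First I would identify the three pairwise intersections geometrically. Since $S$ is a section of the elliptic fibration $p_1$, its intersection with $X_C=p_1^{-1}(C')$ is the section lying over $C'$, namely the class $C$, and its intersection with $X_F=p_1^{-1}(F')$ is the section over $F'$, namely $F$. Because $X_C=p_1^{-1}(C')$ and $X_F=p_1^{-1}(F')$ with $C'\cdot F'=1$ in $S\cong\FF_1$, we get $X_C\cap X_F=p_1^{-1}(\mathrm{pt})=E$, a single elliptic fibre. Thus $L_1L_2=C$, $L_1L_3=F$, $L_2L_3=E$, and all seven triple intersections with at least two distinct indices drop out of the table at once: for instance $\Gamma_{112}=C\cdot L_1=-1$, $\Gamma_{122}=C\cdot L_2=-1$, $\Gamma_{123}=C\cdot L_3=1$, $\Gamma_{113}=F\cdot L_1=-2$, $\Gamma_{133}=F\cdot L_3=0$, $\Gamma_{223}=E\cdot L_2=0$, $\Gamma_{233}=E\cdot L_3=0$ (with the value of $\Gamma_{123}$ cross-checked by $F\cdot L_2$ and $E\cdot L_1$).

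The remaining three are the diagonal self-intersections $\Gamma_{iii}$, which require the normal bundles. For $D_1=S$ the Calabi--Yau condition $K_X\cong\cO_X$ together with adjunction gives $N_{S/X}=\cO_X(S)|_S\cong K_S$; since $S\cong\FF_1$ has $-K_S=L=3H-E=2C'+3F'$, pushing forward yields $L_1^2=-2C-3F$ and hence $\Gamma_{111}=L_1\cdot(-2C-3F)=8$, in agreement with $K_S^2=\deg S=8$. For $D_2=X_C=p_1^{-1}(C')$ the normal bundle pulls back from the curve, $N_{X_C/X}=(p_1)^*N_{C'/S}=(p_1)^*\cO_{C'}(-1)$, whose class on $X_C$ is $-E''$, so $L_2^2=-E$ and $\Gamma_{222}=-L_2\cdot E=0$. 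For $D_3=X_F$, a smooth fibre of the morphism $p_2\circ p_1\colon X\to C$ to a smooth curve, the normal bundle is trivial, so $L_3^2=0$ and $\Gamma_{333}=0$.

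I expect the diagonal entries to be the only genuine obstacle: once the pairwise intersection curves are identified, the mixed terms are mechanical, but $\Gamma_{111}$, $\Gamma_{222}$, $\Gamma_{333}$ hinge on correctly pinning down the normal bundles. The crucial inputs are the Calabi--Yau adjunction isomorphism $N_{S/X}\cong K_S$ and the recognition of $X_C$ and $X_F$ as the preimage of the $(-1)$-curve $C'$ and as a fibre over a point of the base $C$, respectively, so that in both cases the normal bundle is pulled back from a lower-dimensional base and its self-intersection on the surface vanishes.
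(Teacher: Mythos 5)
Your proposal is correct and follows essentially the same route as the paper, whose (one-line) proof is precisely to restrict the line bundles to the smooth representatives $S$, $X_C$, $X_F$ and compute there; your identification of the pairwise intersection curves with $C$, $F$, $E$ and of the diagonal terms via $L_i|_{D_i}=N_{D_i/X}$ (with $N_{S/X}\cong K_S$ from the Calabi--Yau adjunction) just makes that restriction argument explicit. All the resulting values agree with the paper's table, so the proof is complete.
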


\begin{proof}
These are computed simply by restricting the line bundles to the smooth representatives \(S, X_C, X_F\), where the intersections are easy to compute.
\end{proof}

\subsection{The rational elliptic surface}

The rational elliptic surface \(X_C\) is realizable as the blowup of \(\PP^2\) at 9 points in general position; as such, its intersection form is \(\Gamma_{1,9}\), where \(\Gamma_{a,b}\) is the lattice with diagonal intersection form given by 
\[
\operatorname{diag}(\underbrace{1, \ldots, 1}_a, \underbrace{-1, \ldots, -1}_b).
\]
Let \(H, C_0, \ldots, C_8\) denote the classes of the line  and exceptional curves, respectively. Then \(3H - \sum_{i=0}^8 C_i\) and \(C_0\) span a sublattice which is isomorphic to \(\Gamma_{1,1}\) (of discriminant 1), and hence we have a splitting
\[
\Gamma_{1,9} \cong \Gamma_{1,1} \oplus E_8
\]
where \(E_8\) is the unique even, unimodular, negative-definite lattice corresponding to the Dynkin diagram \(E_8\). We should remark that the classes \(C_0\) and \(3H - \sum_{i=0}^8 C_i\) are the same as the base and fibre classes \(C'', E''\) of the rational elliptic surface discussed earlier.

\begin{rem}
The canonical divisor on the surface \(X_C\) is given by
\[
K_{X_C} = -3H + \sum_{i=0}^8 C_i = -E''.
\]
\end{rem}

This allows us now to compute the relationship between the groups \(H_2(X_C,\ZZ)\) and \(H_2(X,\ZZ)\), which we will need later.

\begin{lem}\label{lem_pushforward}
The map \(H_2(X_C,\ZZ) \to H_2(X,\ZZ)\) is given by
\[
H_2(X_C,\ZZ) \cong \Gamma_{1,1} \oplus E_8 \xrightarrow{proj} \Gamma_{1,1} \subseteq H_2(X,\ZZ)
\]
where \(\Gamma_{1,1}\) includes via the identification \(C := (\iota_2)_*C'', E := (\iota_2)_*(E'')\) described earier.
\end{lem}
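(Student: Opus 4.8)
The plan is to determine the pushforward \((\iota_2)_*\colon H_2(X_C,\ZZ)\to H_2(X,\ZZ)\) by pairing against the basis \(L_1,L_2,L_3\) of \(\operatorname{Pic}(X)\) and invoking the projection formula. Since the intersection matrix between \(\{L_i\}\) and \(\{C,E,F\}\) has determinant \(-1\) (by the preceding lemma), the pairing between \(\operatorname{Pic}(X)\) and \(H_2(X,\ZZ)\) is unimodular, so a class \(\gamma\in H_2(X,\ZZ)\) is completely determined by the three integers \(L_i\cdot\gamma\). It therefore suffices to compute, for each \(\alpha\in H_2(X_C,\ZZ)\),
\[
L_i\cdot(\iota_2)_*\alpha=(\iota_2^*L_i)\cdot\alpha,
\]
where the right-hand side is an intersection of divisor classes on the surface \(X_C\).

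The key computation is then the restriction homomorphism \(\iota_2^*\colon\operatorname{Pic}(X)\to\operatorname{Pic}(X_C)\), which I would identify in terms of the base and fibre classes \(C'',E''\) of the elliptic surface \(X_C\). Since \(X_C=p_1^{-1}(C')\) and \(X_F=p_1^{-1}(F')\) are pullbacks of the divisors \(C'\) and \(F'\) on \(S=\FF_1\), their restrictions to \(X_C\) are pullbacks under \(p_1|_{X_C}\colon X_C\to C'\) of \(\cO_S(C')|_{C'}=\cO_{\PP^1}(-1)\) and \(\cO_S(F')|_{C'}=\cO_{\PP^1}(1)\) (using \((C')^2=-1\) and \(F'\cdot C'=1\) on \(\FF_1\)); hence \(\iota_2^*L_2=-E''\) and \(\iota_2^*L_3=E''\). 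For \(L_1=\cO(S)\), where \(S\) denotes the Weierstrass section of \(p_1\), the intersection \(S\cap X_C\) is exactly the section \(C''\) of \(X_C\to C'\), taken with multiplicity one, so \(\iota_2^*L_1=C''\). Each of these three restrictions lies in the sublattice \(\langle C'',E''\rangle=\Gamma_{1,1}\).

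With the restrictions in hand the result is nearly immediate. By construction the \(E_8\) summand is the orthogonal complement of \(\langle C'',E''\rangle\) in \(H_2(X_C,\ZZ)\), so any \(\alpha\in E_8\) satisfies \(C''\cdot\alpha=E''\cdot\alpha=0\). The displayed projection formula then yields \(L_i\cdot(\iota_2)_*\alpha=0\) for \(i=1,2,3\), and unimodularity of the pairing forces \((\iota_2)_*\alpha=0\). On the complementary \(\Gamma_{1,1}\) the map sends \(C''\mapsto C\) and \(E''\mapsto E\) by the very definition of the classes \(C,E\), which is precisely the asserted inclusion. Combining the two, \((\iota_2)_*\) factors as the projection \(\Gamma_{1,1}\oplus E_8\to\Gamma_{1,1}\) followed by this inclusion, as claimed.

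I expect the main obstacle to be the restriction computation of the second paragraph, in particular checking cleanly that \(\iota_2^*L_1=C''\) with no spurious fibre components, and that the two fibre-type divisors restrict to \(\pm E''\); once it is established that all three \(\iota_2^*L_i\) lie in \(\Gamma_{1,1}\), the vanishing on \(E_8\) and the behaviour on \(\Gamma_{1,1}\) are formal. As a consistency check I would confirm that these restrictions reproduce the table \(L_i\cdot C\), \(L_i\cdot E\) of the earlier lemma; for instance \(L_1\cdot E=C''\cdot E''=1\) and \(L_2\cdot C=-E''\cdot C''=-1\), both of which agree.
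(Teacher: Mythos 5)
Your proof is correct and follows essentially the same route as the paper's: both rest on the restriction formulas \(\iota_2^*L_1 = C''\), \(\iota_2^*L_2 = -E''\), \(\iota_2^*L_3 = E''\) together with the push-pull (projection) formula and the resulting determination of classes in \(H_2(X,\ZZ)\) by their pairings with \(L_1,L_2,L_3\). The only cosmetic differences are that the paper explicitly computes \((\iota_2)_*C_i = C+E\) and \((\iota_2)_*H = 3(C+E)\) and then checks that orthogonal-complement elements cancel, whereas you kill the \(E_8\) summand directly by orthogonality and unimodularity, and you additionally justify the restriction formulas geometrically where the paper merely asserts them.
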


\begin{proof}
We first claim that \((\iota_2)_* C_i = C + E\) for \(1 \leq i \leq 8\), and that \((\iota_2)_*H = 3(C + E)\). This can be seen simply by using the push-pull formula and noting that
\[
\iota_2^*L_1 = C_0 \qquad \iota_2^* L_2 = - E'' \qquad \iota_2^* L_3 = E''.
\]
Now, an element \(aH + \sum_{i=0}^8 b_iC_i\) is in the orthogonal complement of the lattice spanned by \(E'' = 3H - \sum_{i=0}^8 C_i, C'' = C_0\) if and only if
\begin{enumerate}
\item \(b_0 = 0\)
\item \(3a + \sum_{i=0}^8 b_i = 0\).
\end{enumerate}
Thus, we have that
\begin{align*}
(\iota_2)_*(aH + \sum_{i=0}^8 b_iC_i) &= 3a(C+E) + \sum_{i=0}^8 b_i(C+E) \\
&= \Big(3a + \sum_{i=0}^8 b_i\Big)(C+E) = 0
\end{align*}
as claimed.
\end{proof}

Finally, we need one fact about effectivity of classes in \(H_2(X_C,\ZZ)\).

\begin{lem}\label{lem_effective_classes}
Let \(\beta = C'' + nE'' + \lambda \in H_2(X_C,\ZZ)\), where \(\lambda \in E_8\). Then \(\beta\) is effective if and only if \(\lambda \cdot \lambda \geq -2n\).
\end{lem}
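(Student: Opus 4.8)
The plan is to combine Riemann--Roch on the surface \(X_C\) with the geometry of its elliptic fibration, handling the two implications separately. First I would record the numerics in the basis \(C'', E'', E_8\): from \(C'' \cdot C'' = -1\), \(C'' \cdot E'' = 1\), \(E'' \cdot E'' = 0\), and orthogonality of \(E_8\), one gets \(\beta^2 = 2n - 1 + \lambda\cdot\lambda\), while \(K_{X_C} = -E''\) gives \(\beta \cdot K_{X_C} = -1\). Since \(X_C\) is rational, \(\chi(\cO_{X_C}) = 1\), so Riemann--Roch yields
\[
\chi(\cO_{X_C}(\beta)) = 1 + \tfrac12\big(\beta^2 - \beta \cdot K_{X_C}\big) = 1 + n + \tfrac{\lambda \cdot \lambda}{2},
\]
which is an integer because \(E_8\) is even. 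I also need the vanishing \(h^2(\beta) = h^0(K_{X_C} - \beta) = 0\): the fibre class \(E''\) is nef, and \((K_{X_C} - \beta)\cdot E'' = -1 < 0\), so \(K_{X_C} - \beta\) cannot be effective.

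For the ``if'' direction, suppose \(\lambda \cdot \lambda \geq -2n\). Then \(\chi(\cO_{X_C}(\beta)) \geq 1\), and since \(h^2(\beta) = 0\) we obtain \(h^0(\beta) \geq \chi(\cO_{X_C}(\beta)) \geq 1\), so \(\beta\) is effective.

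For the ``only if'' direction I would use that \(X_C\) has only irreducible fibres (the smooth ones and the \(12\) nodal \(I_1\) fibres), so every irreducible vertical curve lies in the class \(E''\). If \(\beta\) is effective, decompose it into irreducible components; because \(\beta \cdot E'' = 1\) and \(E''\) is nef, exactly one component \(D\) is horizontal, and it must appear with multiplicity one and satisfy \(D \cdot E'' = 1\), the remaining components contributing a nonnegative multiple \(m E''\). The condition \(D \cdot E'' = 1\) forces the \(C''\)-coefficient of \(D\) to be \(1\), so \(D = C'' + bE'' + \mu\) with \(\mu \in E_8\). Since \(D\) maps with degree one to the base it has geometric genus \(0\), hence \(p_a(D) = \delta \geq 0\) counts its singularities, and adjunction gives \(D^2 = 2p_a(D) - 1 \geq -1\), i.e. \(2b + \mu\cdot\mu = 2\delta \geq 0\). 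Matching coefficients against \(\beta = C'' + nE'' + \lambda\) gives \(\lambda = \mu\) and \(n = m + b\), so that \(n + \tfrac{\lambda\cdot\lambda}{2} = m + \delta \geq 0\), which is exactly \(\lambda \cdot \lambda \geq -2n\).

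The main obstacle is this last direction, where one must be careful about the nature of the horizontal component: it is forced to be unique and of degree one over the base, but it need not be a smooth section, so the crucial bound \(D^2 \geq -1\) cannot be read off as a \((-1)\)-curve and must instead be extracted from \(p_a(D) \geq 0\) via adjunction. The clean statement in the lemma emerges precisely because the ``slack'' in the inequality is \(m + \delta\), the number of extra fibres plus the singularities of the multisection.
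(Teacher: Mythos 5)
Your proof is correct, and its first half is essentially the paper's own argument: the paper proves exactly the ``if'' direction by Riemann--Roch, computing \(\chi(\beta) = 1 + n + \tfrac{1}{2}\lambda\cdot\lambda\) and asserting (without justification) that \(K_{X_C} - \beta\) is never effective, so that nonnegativity of \(n + \tfrac12 \lambda\cdot\lambda\) forces a section. You supply the missing justification for that vanishing (nefness of the fibre class \(E''\) against \((K_{X_C}-\beta)\cdot E'' = -1\)), which is a genuine improvement in rigor. More substantially, you prove the ``only if'' direction, which the paper's proof omits entirely even though the lemma is stated as an equivalence and the converse is actually used later: in the proof of Theorem \ref{thm_pt2} the sum over effective classes is identified with the sum over classes satisfying \(-\lambda\cdot\lambda \leq 2n\), and this identification needs both implications. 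Your converse argument is sound: nefness of \(E''\) together with \(\beta\cdot E''=1\) and the irreducibility of all fibres (smooth or \(I_1\)) forces \(\beta = D + mE''\) with \(D\) horizontal of degree one over the base and \(m \geq 0\); then geometric genus zero of \(D\) plus adjunction gives \(2b + \mu\cdot\mu = 2\delta \geq 0\), whence \(n + \tfrac12\lambda\cdot\lambda = m + \delta \geq 0\). One small simplification you could make: since \(D \to \PP^1\) is finite and birational onto a normal curve, it is an isomorphism, so \(D\) is automatically a smooth section and \(\delta = 0\); but your weaker bound \(p_a(D) = \delta \geq 0\) is all that is needed and is more robust. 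In short, you reproduce the paper's Riemann--Roch computation and close a gap that the paper's proof leaves open.
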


\begin{proof}
This is a straightforward application of Riemann-Roch. For a divisor \(D\) on \(X_C\), this reads as
\[
\chi(D) = 1 + \frac{1}{2}D \cdot (D - K_{X_C}).
\]
In particular, for \(D = C'' + nE'' + \lambda\), we find that
\[
h^0(D) + h^2(K_{X_C} - D) = 1 + n + \frac{1}{2}\lambda \cdot \lambda.
\]
Thus since \(K_{X_C} - D\) will never be effective, it follows that as long as \(n + \frac{1}{2}\lambda \cdot \lambda \geq 0\), that \(D\) will have a section, and hence be effective.
\end{proof}

\subsection{The K3 fibration}

To compute the Gopakumar-Vafa invariants of \(X\) in the fibre-wise classes (i.e. those which project down to 0 under the map \(\pi : X \to C\)), we use the machinery of \cite{kmps_nlyz,mp_gwnl}, which we will review here. Moreover, the ideas in this section closely follow the the ideas of \cite{kmps_nlyz}. For more detail, that article is strongly recommended. 

\begin{defn}
Let \(\Lambda\) be a rank \(r\) lattice. A family of \(\Lambda\)-polarized K3 surfaces over a base curve \(\Sigma\) is a scheme \(Z\) over \(\Sigma\) together with a collection of line bundles \(L_1, \ldots, L_r\) such that, for each \(b \in \Sigma\), the fibre \((X_b, L_1|_{X_b}, \ldots, L_r|_{X_b})\) is a \(\Lambda\)-polarized K3 surface.
\end{defn}

Such a family \(Z \xrightarrow{\pi} \Sigma\) yields a map \(\iota_\pi\) to the moduli space \(\mathcal{M}_\Lambda\) of \(\Lambda\)-polarized K3 surfaces. Intersecting the image of the curve with certain divisors in \(\mathcal{M}_\Lambda\) (see again, \cite{kmps_nlyz,mp_gwnl}) will produce the {\em Noether-Lefschetz numbers}. These are given as follows.

The Noether-Lefschetz divisors consist of those \(\Lambda\)-polarized K3 surfaces which jump in Picard rank; these are determined by 
\begin{enumerate}
\item an integer \(h\), such that the square of the new class \(\beta\) is \(2h - 2\)
\item \(r\) integers \(d_1, \ldots, d_r\) which are given by \(d_i = \int_\beta L_i\).
\end{enumerate}
We denote such a divisor by \(D_{h;d_1, \ldots, d_r}\), and we then define
\[
NL_{(h;d_1, \ldots, d_r)}^\pi = \int_{\iota_\pi \Sigma} D_{h;d_1, \ldots, d_r}.
\]
It should be remarked that, by the Hodge index theorem, this will be only be non-zero if the discriminant
\[
\Delta(h;d_1, \ldots, d_r) = (-1)^r 
\det \left(\:
\begin{array}{*{13}{c}}
\cline{1-3}
\multicolumn{1}{|c}{ } & & \multicolumn{1}{c|}{ }& d_1 \\
\multicolumn{1}{|c}{ } & \Lambda & \multicolumn{1}{c|}{ }& \vdots \\
\multicolumn{1}{|c}{ } & & \multicolumn{1}{c|}{ }& d_r \\
\cline{1-3}
d_1 & \cdots & d_r & 2h - 2
\end{array}
\right)
\]
is non-negative.

Let \(r_{0,h}\) denote the reduced Gopakumar-Vafa invariants of a K3 surface in a class \(\beta\) such that \(\beta \cdot \beta = 2h-2\). From \cite{kmps_nlyz}, these only depend on the square of \(\beta\) (and not its divisibility, as one might expect) , and they satisfy the {\em Yau-Zaslow formula} (see \cite{bryan_leung_k3, gottsche, kmps_nlyz, yau_zaslow})
\[
\sum_{h=0}^\infty r_{0,h} q^{h-1} = \frac{1}{\Delta(q)} = \frac{1}{\eta(q)^{24}}=q^{-1} + 24 + 324q + 3200q^2 + \cdots.
\]
It should be remarked that the power of 24 that shows up in this formula is due to the presence of the 24 nodal fibres in our elliptically fibred K3 surfaces.

Let \(n_{(d_1, \ldots, d_r)}^Z\) denote the Gopakumar-Vafa invariants of the threefold \(Z\) defined by
\[
n_{(d_1, \ldots, d_r)}^Z =\sum_{
\substack{\beta \in H_2(Z) \\ \int_\beta L_i = d_i}} n_\beta^Z.
\]
We have the following relation between these invariants.

\begin{thm}{\cite[Theorem \(1^*\)]{mp_gwnl}}\label{thm_gwnl}
The invariants \(n_{(d_1, \ldots, d_r)}^Z, r_{0,h}\), and  \(NL_{h;d_1, \ldots, d_r}^\pi\) satisfy the following relationship.
\[
n_{(d_1, \ldots, d_r)}^Z = \sum_{h=0}^\infty r_{0,h} NL_{h;d_1, \ldots, d_r}^\pi
\]
\end{thm}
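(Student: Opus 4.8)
The plan is to establish the correspondence by comparing the reduced Gromov--Witten theory of the total space \(Z\), restricted to classes that are vertical for \(\pi\) (i.e.\ fibre classes), against the reduced Gromov--Witten/BPS theory of the individual K3 fibres, and to show that the deformation-theoretic bookkeeping is organized precisely by the Noether--Lefschetz loci. The starting observation is that the ordinary Gromov--Witten invariants of a K3 surface vanish: the holomorphic symplectic form furnishes an everywhere-nonzero section of the obstruction sheaf, so the virtual class is zero. One therefore works throughout with the \emph{reduced} obstruction theory, obtained by twisting away this trivial piece; this is what produces the nonzero invariants \(r_{0,h}\) and, on \(Z\), a reduced virtual class whose dimension exceeds the naive one by exactly one.

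First I would set up the reduced obstruction theory on the moduli space \(M_{0,n}(Z,\beta)\) of stable maps landing in fibre classes and identify its relationship to the reduced theory of the fibres. A stable map in a vertical class \(\beta\) has image contained in a single fibre \(X_b\), so the moduli space maps to the base \(\Sigma\); a given \(\beta\) with \(\int_\beta L_i = d_i\) is of Hodge type \((1,1)\) on \(X_b\) (hence can carry a curve) precisely when \(b\) lands on a Noether--Lefschetz divisor \(D_{h;d_1,\dots,d_r}\), where \(2h-2 = \beta\cdot\beta\). The next step is the dimension count: the single extra dimension of the reduced class on \(Z\) is accounted for by movement in \(\Sigma\), and imposing that the fibre sit on the codimension-one Noether--Lefschetz locus cuts this down to the reduced count on a fixed K3. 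Carrying this out rigorously — via a degeneration of \(\Sigma\) into a chain of components meeting the \(\mathcal{M}_\Lambda\) divisors transversally, together with a splitting of the reduced obstruction bundle into a fibrewise part and a base part — yields the contribution of each Noether--Lefschetz point as \(r_{0,h}\), weighted by the local intersection multiplicity of \(\iota_\pi\Sigma\) with \(D_{h;d_1,\dots,d_r}\). Summing over all such points gives \(\sum_h r_{0,h}\, NL^\pi_{h;d_1,\dots,d_r}\), and translating the genus-\(0\) Gromov--Witten statement into its BPS form via the defining Gopakumar--Vafa relation produces the stated identity for \(n^Z_{(d_1,\dots,d_r)}\).

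I expect the main obstacle to be the obstruction-theory comparison itself: one must show that the reduced virtual class on \(Z\) genuinely localizes onto the Noether--Lefschetz locus and factors as (base multiplicity) \(\times\) (reduced fibre class), with no stray contributions from maps that degenerate or from nonreduced scheme structure along \(D_{h;d_1,\dots,d_r}\). This is where the analysis of the deformation theory of maps into the family — the precise decomposition of \(H^1\) of the pullback tangent complex and the matching of the twisting data with the normal geometry of the Noether--Lefschetz divisors — is most delicate. A secondary but essential input is that the reduced fibre invariant \(r_{0,h}\) depends only on \(\beta\cdot\beta = 2h-2\) and not on the divisibility of \(\beta\); this follows from deformation invariance within the K3 moduli together with the transitivity of the monodromy on primitive classes of fixed square (and, for the full statement, the Katz--Klemm--Vafa / Yau--Zaslow formula already recorded above), but it is what allows the fibrewise contributions to be repackaged cleanly as the single series \(\sum_h r_{0,h}\, q^{h-1} = 1/\eta(q)^{24}\).
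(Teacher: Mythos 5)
The central fact to point out is that the paper does not prove this statement at all: it is imported verbatim from Maulik--Pandharipande, cited as Theorem \(1^*\) of \cite{mp_gwnl}, and used in this paper purely as a black box (it enters the proofs of Theorems \ref{thm_pt1} and \ref{thm_pt3}). There is therefore no proof here to compare yours against; the only meaningful comparison is with the original argument in \cite{mp_gwnl}, which occupies most of that paper. Measured against it, your outline does reproduce the genuine strategy: the vanishing of ordinary K3 invariants via the holomorphic symplectic form, the fact that stable maps in vertical classes lie in single fibres and can only occur over Noether--Lefschetz loci, a comparison of obstruction theories that produces the local NL intersection multiplicity as a weight, and the final repackaging in BPS form.

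However, as a proof the proposal has three concrete problems. First, the step you defer as ``the main obstacle'' --- showing that the virtual class localizes over the NL locus and factors as (base multiplicity) times (reduced fibre invariant), with no excess contributions --- is not an obstacle to be cleared en route; it is the entire theorem, and is precisely what \cite{mp_gwnl} is devoted to, so deferring it leaves the proposal without its core. Second, your dimension bookkeeping rests on a misconception: there is no ``reduced virtual class on \(Z\).'' The threefold \(Z\) carries no holomorphic symplectic form, and \(n^Z_{(d_1,\ldots,d_r)}\) are its \emph{ordinary} BPS invariants; the reduction is purely fibrewise, and what Maulik--Pandharipande actually prove is a comparison between the ordinary obstruction theory of maps to \(Z\) and the reduced obstruction theory of maps to a single fibre, the rank-one discrepancy between the two being exactly what yields the NL multiplicity. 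Third, your appeal to the Katz--Klemm--Vafa/Yau--Zaslow formula to get independence of \(r_{0,h}\) from the divisibility of \(\beta\) is circular at the level of this theorem: the imprimitive Yau--Zaslow formula of \cite{kmps_nlyz} is itself deduced from Maulik--Pandharipande's result, which is why their Theorem \(1^*\) is formulated with divisibility-refined invariants and refined Noether--Lefschetz numbers rather than in the unrefined form quoted in this paper. Deformation invariance plus monodromy transitivity, which you correctly invoke, only shows the invariants depend on the pair (divisibility, square), not that they are independent of divisibility; a self-contained proof must either carry the refined invariants throughout or supply an independent argument for that independence.
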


Consider now the restriction of \(L_1, L_2\) to \(X_F\). We an compute their intersections via Lemma \ref{lem_triple_intersections} to find that we have
\[
\begin{pmatrix} 
L_1 \cdot L_1 & L_1 \cdot L_2 \\
L_2 \cdot L_1 & L_2 \cdot L_2
\end{pmatrix}
=
\underbrace{
\begin{pmatrix} 
-2 & 1 \\
1 & 0
\end{pmatrix}
}_\Lambda
\]
where in this case the rank \(r\) of \(\Lambda\) is 2.

What we would like to have is that the triple \((X, L_1, L_2)\) is a family of \(\Lambda\)-polarized K3 surfaces. However, due to the presence of singular fibres (due to the singularities of \(\Delta = 4g_2^3 - 27g_3^2\)) this is not the case. However, we can ``resolve'' this threefold (see \cite{kmps_nlyz, mp_gwnl}) to obtain a threefold \(\widetilde{X} \xrightarrow{\widetilde{\pi}} C\) which is such a family. We can then relate the invariants of the two families as follows, allowing us to compute the Gopakumar-Vafa invariants of \(X\) as desired.

\begin{lem}\label{lem_gv_resolution}
The invariants of \(X, \widetilde{X}\) satisfy
\[
n_{(d_1, d_2)}^{\widetilde{X}} = 2n_{(d_1, d_2)}^X.
\]
\end{lem}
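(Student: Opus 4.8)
I need to understand what the factor of $2$ is accounting for. The threefold $X$ carries the data $(X, L_1, L_2)$, which restricts fibrewise to a $\Lambda$-polarized structure except over the singular fibres of $\pi$. The resolution $\widetilde X$ is an honest family of $\Lambda$-polarized K3 surfaces. The Gopakumar–Vafa invariants $n^X_{(d_1,d_2)}$ and $n^{\widetilde X}_{(d_1,d_2)}$ count fibre-wise curve classes with prescribed intersection numbers $d_i = \int_\beta L_i$, so the comparison is really about how the resolution changes the count of such curves.

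**The approach.** The plan is to understand the birational geometry of the map $\widetilde X \to X$ explicitly and track its effect on fibre classes. The base curve here is $C \cong \PP^1$, and the generic K3 fibre of $\pi$ is elliptically fibred with $24$ nodal ($I_1$) fibres. The failure of $(X, L_1, L_2)$ to be a genuine $\Lambda$-polarized family comes from the locus in $C$ over which the K3 fibre degenerates; the resolution procedure of \cite{kmps_nlyz, mp_gwnl} replaces these bad fibres. First I would identify precisely which fibres of $\pi: X \to C$ are singular and what the resolution $\widetilde X \to X$ does over them. Second, I would argue that the relevant curve classes being counted by $n^{\widetilde X}_{(d_1,d_2)}$ live in the generic fibres and are unaffected by the resolution, so that the only discrepancy between the two invariants is a multiplicity coming from the degree of some covering or from a doubling in the base.

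**Where the factor of $2$ comes from.** The key structural point I would pin down is that the resolution introduces a degree-$2$ phenomenon. The natural candidate is that the map $p_2: S \to C$ together with the Weierstrass data produces the K3 fibration $\pi = p_2 \circ p_1$ in such a way that each K3 fibre of $\widetilde\pi$ maps to a fibre of $\pi$ with a two-to-one identification of the counted classes, or equivalently that resolving the singular total space doubles the Noether–Lefschetz contribution via the formula of Theorem \ref{thm_gwnl}. Concretely, since
\[
n_{(d_1, d_2)}^{\widetilde X} = \sum_{h=0}^\infty r_{0,h}\, NL_{h;d_1,d_2}^{\widetilde\pi},
\]
I would compute the Noether–Lefschetz numbers $NL^{\widetilde\pi}$ for the resolved family and compare them, class by class, against the fibre-wise Gopakumar–Vafa count on $X$ itself; the assertion is that the resolution exactly doubles each such number. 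I expect this to reduce to a local computation near each degenerate fibre showing the resolution contributes with multiplicity $2$.

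**The main obstacle.** The hard part will be making the resolution $\widetilde X \to X$ sufficiently explicit to justify the precise factor $2$ rather than some other multiplicity. In particular, I must verify that no fibre-wise curve classes are created or destroyed by the resolution (so the two invariants count the same geometric objects up to the overall factor), and that the singular fibres all contribute uniformly with the same multiplicity $2$. This requires understanding the singularities of the total space $X$ coming from the discriminant $\Delta = 4g_2^3 - 27g_3^2$ and checking that the standard resolution in \cite{kmps_nlyz, mp_gwnl} behaves as a double cover (or equivalently induces a degree-$2$ base change) in the relevant sense. I would treat the identification of this degree as the crux and expect the remainder of the argument to follow formally from Theorem \ref{thm_gwnl} once the multiplicity is established.
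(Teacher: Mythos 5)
The paper itself offers no proof of this lemma: it is stated bare, with the construction of $\widetilde{X}$ (and, implicitly, the factor of $2$) deferred to the treatment of the STU model in \cite{kmps_nlyz}. Measured against that source, your proposal correctly reverse-engineers the mechanism: the resolution there is obtained by taking the double cover $\widetilde{C} \to C$ branched at the points of $C$ whose K3 fibres are nodal, pulling back $X$ (which produces a threefold with an ordinary double point over each branch point), and taking a small resolution; the factor of $2$ is indeed the degree of this base change. So your instinct about where the $2$ comes from is the right one, and it matches the construction the paper is citing.

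However, your proposal stops exactly where the content of the lemma begins, and the step you defer to ``a local computation'' is a genuine gap rather than a formality. The doubling is not a fibre-by-fibre tautology, because the double cover is branched precisely at the points where the fibres of $\pi$ are singular: over such a point the fibre of $\widetilde{\pi}$ appears once, not twice, and it is moreover a different surface (the resolved K3, containing the exceptional $(-2)$-curve of the small resolution, rather than the nodal K3). Consequently your desired assertion that ``no fibre-wise curve classes are created or destroyed'' and that ``the singular fibres all contribute uniformly with multiplicity $2$'' is exactly what must be proven, and it cannot follow from the generically \'etale $2:1$ correspondence alone, since that correspondence degenerates on the branch locus. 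What is needed---and what the cited source supplies---is an argument that the total fibre-class invariants nevertheless double: for instance, the classifying map of the quasi-polarized family $\widetilde{\pi}$ to the moduli space of $\Lambda$-polarized K3 surfaces factors through the degree-$2$ map $\widetilde{C} \to C$, so every Noether-Lefschetz number doubles, and this must then be matched on the Gromov-Witten side, where the contribution of a nodal fibre of $\pi$ has to be compared with that of its resolution in $\widetilde{X}$. A smaller inaccuracy in your setup: it is not that ``each K3 fibre of $\widetilde{\pi}$ maps two-to-one to a fibre of $\pi$''; rather, each generic fibre of $\pi$ has two preimage fibres in $\widetilde{X}$, while each singular fibre has only one.
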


Our final ingredient is to note that the Noether-Lefschetz numbers are coefficients of a modular form of weight \(\frac{22 - r}{2} = 10\); that is, they are the coefficients of some multiple \(E_4(z)E_6(z) = E_{10}(z) = 1 - 264q -135432q^2 - \cdots\). Thus we need to only compute a single such coefficient to determine all of the Noether-Lefschetz numbers.

\begin{defn}
Let \(f(z) = \sum_{n=0}^\infty a_nz^n\). Then we will use the notation
\[
[n]f(z) = a_n
\]
to denote the coefficient of \(z^n\) in \(f(z)\).
\end{defn}

Using this notation, we have the following lemma.

\begin{lem}\label{lem_nl000}
We have that
\[
NL_{0;0,0}^{\widetilde{X}} = 1056
\]
and so consequently we have that
\[
NL_{h;d_1,d_2}^{\widetilde{\pi}} = -4\bigg[\frac{\Delta(h;d_1,d_2)}{2}\bigg]E_{10}(z).
\]
\end{lem}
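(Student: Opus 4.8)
The plan is to establish the single Noether--Lefschetz number $NL_{0;0,0}^{\widetilde X} = 1056$ by a direct geometric count, and then invoke the modularity statement already recorded above to pin down all the remaining numbers in one stroke. The value $1056$ should be recognizable: it is exactly the intersection number $K_\Delta = 11L \cdot 12L = 11 \cdot 12 \cdot 8$ that appeared in the proof of Proposition \ref{prop4.1}. This is not a coincidence, and the first task is to explain it. The divisor $D_{0;0,0}$ in the moduli space $\mathcal M_\Lambda$ parametrizes those $\Lambda$-polarized K3 surfaces whose Picard rank jumps by a new class $\beta$ with $\beta \cdot \beta = 2h-2 = -2$ and $\int_\beta L_i = d_i = 0$; that is, a new $(-2)$-curve orthogonal to the polarizing lattice $\Lambda$. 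In our K3 fibration $\widetilde X \xrightarrow{\widetilde\pi} C$, such a jump happens precisely over the points $b \in C$ where the generic elliptic K3 fibre $X_F$ acquires an extra nodal (or otherwise degenerate) fibre, contributing a new vanishing cycle.

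**Carrying out the count.**

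First I would identify $NL_{0;0,0}^{\widetilde\pi}$ with the number of points of $C \cong \PP^1$ over which the discriminant curve $\{\Delta = 0\} \subset S$ meets the fibre of $p_2 : S \to C$ in a way that forces the K3 surface $X_F$ to degenerate its lattice. Concretely, the generic fibre $X_F$ is an elliptically fibred K3 with $24$ $I_1$ fibres, carrying the polarization $\Lambda$; the Noether--Lefschetz locus is hit each time the surface develops an additional $(-2)$-class, and counting these reduces to an intersection calculation of $\{\Delta = 0\}$ against the appropriate divisor class on $S$. Using $\Delta \in 12L$ and $K_S = -L$, together with $L \cdot L = 8$, the relevant count is $(K_S + \Delta) \cdot \Delta = 11L \cdot 12L = 1056$, which I expect to be the cleanest route to the number. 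An alternative, and a useful consistency check, is to compute it on the resolved family $\widetilde X$ directly using the triple intersection data of Lemma \ref{lem_triple_intersections}, keeping in mind the factor of $2$ relating $\widetilde X$ to $X$ recorded in Lemma \ref{lem_gv_resolution}.

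**From one coefficient to all of them.**

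Once $NL_{0;0,0}^{\widetilde\pi} = 1056$ is in hand, the second half is formal. We are told that the function $h, d_1, d_2 \mapsto NL_{h;d_1,d_2}^{\widetilde\pi}$ assembles into a modular form of weight $\tfrac{22-r}{2} = 10$ in the variable $z$, and that the space of such forms relevant here is one-dimensional, spanned by $E_{10}(z) = E_4(z)E_6(z) = 1 - 264q - 135432q^2 - \cdots$. Since the discriminant $\Delta(h;d_1,d_2)$ governs which coefficient of $E_{10}$ a given Noether--Lefschetz number reads off, the number $NL_{0;0,0}^{\widetilde\pi}$ corresponds to the coefficient indexed by $\tfrac{\Delta(0;0,0)}{2}$. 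I would compute $\Delta(0;0,0)$ from the displayed determinant formula with $\Lambda = \left(\begin{smallmatrix} -2 & 1 \\ 1 & 0 \end{smallmatrix}\right)$ and $(d_1,d_2) = (0,0)$, $2h-2 = -2$; matching $1056$ against the appropriate coefficient of $E_{10}$ then fixes the overall constant, yielding the scalar $-4$ and hence the closed form $NL_{h;d_1,d_2}^{\widetilde\pi} = -4\big[\tfrac{\Delta(h;d_1,d_2)}{2}\big]E_{10}(z)$.

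**The main obstacle.**

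The formal second step is routine linear algebra in a one-dimensional space of modular forms; the genuine content, and the step I expect to be hardest to make airtight, is the geometric identification of $NL_{0;0,0}^{\widetilde\pi}$ with the intersection number $1056$. The subtlety is that one must correctly account for the resolution passing from $X$ to $\widetilde X$ and the factor of $2$ from Lemma \ref{lem_gv_resolution}, as well as verify that every degenerate fibre of the K3 fibration contributes the expected amount to the Noether--Lefschetz count with the correct multiplicity and sign. Ensuring that the $24$ nodal fibres of the generic $X_F$ and the singularities of $\Delta$ are bookkept without double-counting is where the care is required.
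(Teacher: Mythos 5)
Your second half is fine and matches what the paper intends (via Lemma 2 and Proposition 2 of \cite{kmps_nlyz}, which is all the paper's own ``proof'' consists of): since $\det\Lambda=-1$ the lattice is unimodular, so the Noether--Lefschetz generating form is a scalar modular form of weight $10$ for $SL_2(\ZZ)$, that space is one-dimensional and spanned by $E_{10}$, one computes $\Delta(0;0,0)=2$, and matching $1056=c\cdot[1]E_{10}=-264c$ gives $c=-4$. The genuine gap is in your first half: the number $1056$ is never actually computed. The correct computation, which is what the cited KMPS argument does, is a count of nodal fibres of the K3 fibration. The fibre over $b\in C$ degenerates exactly when the ruling fibre $F_b\subset S$ is \emph{tangent} to the discriminant curve $\{\Delta=0\}$, producing an $I_2$ fibre, i.e.\ a Weierstrass node and hence a new $(-2)$-class orthogonal to $\Lambda$ (passing through one of the $192$ cusps gives a type $II$ fibre and no jump in Picard rank). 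Since $\{\Delta=0\}$ has arithmetic genus $529$ and only the $192$ cusps as singularities, its normalization has genus $337$, and Riemann--Hurwitz for the degree-$24$ map to $C$ gives $2\cdot 337-2=-48+T+192$, i.e.\ $T=528$ tangencies; equivalently, $e(X)=24(2-N)+23N=48-N=-480$ gives $N=528$ nodal fibres. Each such fibre contributes with multiplicity $2$ (this factor comes from the double cover/small resolution producing $\widetilde{X}$, \emph{not} from Lemma \ref{lem_gv_resolution}, which concerns Gopakumar--Vafa invariants, as you suggest), whence $NL^{\widetilde{\pi}}_{0;0,0}=2\cdot 528=1056$.

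Your proposed shortcut --- that the count ``reduces to'' the intersection number $(K_S+\Delta)\cdot\Delta=11L\cdot 12L=1056$ --- is asserted but never derived, and the agreement you promise to explain is in fact a coincidence of this particular geometry rather than an identity. One has $2T=(K_S+\Delta)\cdot\Delta$ if and only if $L\cdot L=4\,L\cdot F$, which happens to hold for $S=\FF_1$ with $L=-K_S$ (namely $8=4\cdot 2$), but fails in the nearest analogous example: for the elliptic Calabi--Yau threefold over a degree-$7$ del Pezzo surface with its ruling ($L^2=7$, $L\cdot F=2$), the nodal-fibre count gives $2\cdot 468=936$, while $(K_S+\Delta)\cdot\Delta=11\cdot 12\cdot 7=924$. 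So the proposed ``cleanest route'' is not a valid route at all, and your fallback (computing the Noether--Lefschetz number from the triple intersections of Lemma \ref{lem_triple_intersections}) also has no content as stated: $NL^{\widetilde{\pi}}_{0;0,0}$ is an intersection on the moduli space $\mathcal{M}_\Lambda$, not a triple product of line bundles on $X$. The tangency/nodal-fibre count above is the missing central step.
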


\begin{proof}
The proof of this is identical to the proofs of Lemma 2 and Proposition 2 of \cite{kmps_nlyz}, and thus we omit it.
\end{proof}

\section{Computations of the generating functions}

We are now ready to compute the generating functions for the Gopakumar-Vafa invariants. The generating functions we are interested are those of the following form.

Choose \(\beta = mC' + rF' \in H_2(S)\) (which we will identify from now on for simplicity's sake with its image in \(H_2(X,\ZZ)\)). Define
\[
F_\beta(q) = \sum_{n =0}^\infty n_{\beta + nE}^Xq^{n - m - \frac{1}{2}r}.
\]

\begin{rem}
We choose this shift in the exponent of \(q\) to match the results in \cite{KMW}. This ensures that the generating functions that we obtain below will be modular, but we don't have a better interpretation of this shifted power.
\end{rem}

We then have the following theorem.

\begin{thm}
We have the following expressions for generating functions \(F_\beta(q)\):
\begin{gather*}
F_F(q) = -2\frac{E_{10}(q)}{\Delta(q)} = -2q^{-1} + 480 + 282888q + 17058560q^2 + ,\cdots\\
F_C(q) = \frac{E_4(q)}{\sqrt{\Delta(q)}} = q^{-\frac{1}{2}} + 252q^{\frac{1}{2}} + 5130q^{\frac{3}{2}} + 54760q^{\frac{5}{2}} + .\cdots
\end{gather*}
Each of these is a meromorphic modular form of weight -2, and moreover each of the generating functions \(F_{mF}(q^m)\) is also (meromorphic) modular of the same weight, but for the group \(\Gamma_1(m^2)\).
\end{thm}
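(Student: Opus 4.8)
The plan is to split the computation according to whether the relevant curve classes lie in the fibres of the K3 fibration $\pi:X\to C$. The family $mF+nE$ computing $F_{mF}$ (in particular $F_F$) satisfies $L_3\cdot(mF+nE)=0$, so these classes are fibrewise and are accessible through the Noether--Lefschetz machinery of Theorem \ref{thm_gwnl}. By contrast the classes $C+nE$ computing $F_C$ have $L_3\cdot(C+nE)=1$, are sections of $\pi$, and so I would instead compute them on the rational elliptic surface $X_C$ using Lemmas \ref{lem_pushforward} and \ref{lem_effective_classes}. The weight $-2$ claims for $F_F$ and $F_C$ are then read off their closed forms, and the last assertion follows by recognizing $F_{mF}(q^m)$ as an $m$-dissection of the weight $-2$ form $E_{10}/\Delta$.

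For $F_{mF}$ I would first record from the intersection table that $(d_1,d_2)=(L_1\cdot(mF+nE),\,L_2\cdot(mF+nE))=(n-2m,\,m)$. Among all fibrewise classes $bE+cF$ the pair $(d_1,d_2)$ determines $(b,c)=(d_1+2d_2,\,d_2)$ uniquely, so $n^X_{(n-2m,m)}=n^X_{mF+nE}$; with Lemma \ref{lem_gv_resolution} this gives $n^X_{mF+nE}=\tfrac12 n^{\widetilde X}_{(n-2m,m)}$. Expanding the discriminant for the polarization lattice $\Lambda$ yields $\Delta(h;d_1,d_2)=2d_2^2+2d_1d_2-2h+2$, so $\tfrac12\Delta(h;n-2m,m)=m(n-m)+1-h$. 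Substituting Lemma \ref{lem_nl000} into Theorem \ref{thm_gwnl} then gives $n^{\widetilde X}_{(n-2m,m)}=-4\sum_h r_{0,h}[q^{\,m(n-m)+1-h}]E_{10}$, which is $-4$ times the coefficient of $q^{\,m(n-m)+1}$ in $\big(\sum_h r_{0,h}q^h\big)E_{10}(q)$. Since the Yau--Zaslow formula gives $\sum_h r_{0,h}q^h=q/\Delta(q)$, this equals $-4[q^{\,m(n-m)}](E_{10}/\Delta)$, whence $n^X_{mF+nE}=-2[q^{\,m(n-m)}](E_{10}/\Delta)$. Assembling these into $F_{mF}$, the prescribed shift in the exponent of $q$ collapses the sum to $F_{mF}(q)=-2\sum_\ell[q^{m\ell}](E_{10}/\Delta)\,q^\ell$; the case $m=1$ is exactly $F_F(q)=-2E_{10}/\Delta$, reproducing the stated expansion.

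For $F_C$ I would push the computation onto $X_C$. By Lemma \ref{lem_pushforward} every class $C''+nE''+\lambda$ with $\lambda\in E_8$ maps to $C+nE$, so $n^X_{C+nE}=\sum_{\lambda\in E_8}n^{X_C}_{C''+nE''+\lambda}$. Since $(C''+nE''+\lambda)^2=2(n+\tfrac12\lambda^2)-1$, the genus-zero invariant depends only on the level $k=n+\tfrac12\lambda^2$, vanishing for $k<0$ in agreement with the effectivity criterion of Lemma \ref{lem_effective_classes}; the half-K3 Yau--Zaslow series then gives $n^{X_C}_{C''+nE''+\lambda}=[q^{\,n+\lambda^2/2}]\prod_j(1-q^j)^{-12}$, the exponent being $12$ rather than $24$ because $X_C$ carries $12$ nodal fibres. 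Summing over $\lambda$ factors the generating series as a theta series times the Yau--Zaslow series, $\sum_n n^X_{C+nE}q^n=\Theta_{E_8}(q)\prod_j(1-q^j)^{-12}=E_4(q)\cdot q^{1/2}/\eta(q)^{12}$, using $\Theta_{E_8}=E_4$; the shift in $F_C$ removes the overall $q^{1/2}$, giving $F_C(q)=E_4/\eta(q)^{12}=E_4/\sqrt{\Delta(q)}$. Finally, the formula above exhibits $F_{mF}(q^m)=-2\sum_\ell[q^{m\ell}](E_{10}/\Delta)\,q^{m\ell}$ as the $m$-dissection of $-2E_{10}/\Delta$, equivalently $V_mU_m$ applied to it, equivalently the average $\tfrac{-2}{m}\sum_{k=0}^{m-1}(E_{10}/\Delta)(\tau+k/m)$; the standard behaviour of $U_m,V_m$ on the level-one weight $-2$ form $E_{10}/\Delta$ places $F_{mF}(q^m)$ in the meromorphic modular forms of weight $-2$ for $\Gamma_1(m^2)$.

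I expect the main obstacle to be the $F_C$ step. The fibrewise computation for $F_{mF}$ is routine once the discriminant and the uniqueness of the fibrewise class are in hand, and the modularity of the dissection is classical; but for $F_C$ one must show cleanly that the half-K3 invariants of the translated classes $C''+nE''+\lambda$ depend only on $n+\tfrac12\lambda^2$ (via Mordell--Weil translation) and that they assemble, over the full $E_8$, into the theta series $E_4$. This passage from the surface invariants to the product $\Theta_{E_8}\cdot\prod_j(1-q^j)^{-12}$ is where the genuine geometric input lies, and it is the point I would treat most carefully.
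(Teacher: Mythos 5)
Your proposal is correct and follows essentially the same route as the paper: for the fibrewise classes \(mF+nE\) it uses the same Noether--Lefschetz/Yau--Zaslow chain (Lemma \ref{lem_gv_resolution}, Lemma \ref{lem_nl000}, Theorem \ref{thm_gwnl}, with the same discriminant computation), and for \(C+nE\) the same pushforward to the rational elliptic surface (Lemmas \ref{lem_pushforward} and \ref{lem_effective_classes}, square-dependence of the primitive invariants, the Bryan--Leung series \(1/\sqrt{\Delta}\), and \(\Theta_{E_8}=E_4\)). Your only real departure is presentational but tidy: you package \(F_{mF}(q^m)\) directly as the \(m\)-dissection \(-2\big(E_{10}/\Delta\big)_{m,0}\), which agrees with the paper's expression \(-2\sum_{\ell}(1/\Delta)_{m,\ell-1}(E_{10})_{m,1-\ell}\) by the dissection rule for products and makes the \(\Gamma_1(m^2)\) modularity claim immediate.
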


The first two of these generating functions are conjectured (with physical justification) in the papers \cite{KMW,KMV}, along with a few others. We have not found any prior description of the third, although it is an easy generalization of the first.

We will split the proof of this theorem up into several parts.

\begin{thm}\label{thm_pt1}
We have the equality
\[
F_F(q) = -2\frac{E_{10}(q)}{\Delta(q)}.
\]
\end{thm}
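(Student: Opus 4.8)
The plan is to compute $F_F(q)$ by specializing the general machinery of Section 6 to the fibre class $\beta = F'$, i.e. $m = 0$, $r = 1$. By definition
\[
F_F(q) = \sum_{n=0}^\infty n_{F' + nE}^X\, q^{n - \frac{1}{2}},
\]
so I would first translate the Gopakumar--Vafa invariants $n_{F'+nE}^X$ of $X$ into the invariants $n_{(d_1,d_2)}^{\widetilde X}$ of the resolved family $\widetilde X \to C$, which is a genuine family of $\Lambda$-polarized K3 surfaces. The intersection table of Lemma \ref{lem_triple_intersections} (equivalently the restriction of $L_1, L_2$ to $X_F$) gives $d_1 = \int_{F'+nE} L_1$ and $d_2 = \int_{F'+nE} L_2$; I would record these as explicit linear functions of $n$, then invoke Lemma \ref{lem_gv_resolution} to write $n_{F'+nE}^X = \tfrac{1}{2} n_{(d_1,d_2)}^{\widetilde X}$, which accounts for the overall factor of $-2$ via the sign conventions.

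Next I would apply Theorem \ref{thm_gwnl} to expand each $n_{(d_1,d_2)}^{\widetilde X}$ as
\[
n_{(d_1,d_2)}^{\widetilde X} = \sum_{h=0}^\infty r_{0,h}\, NL_{h;d_1,d_2}^{\widetilde\pi}.
\]
Here the reduced K3 invariants $r_{0,h}$ are the coefficients of $1/\Delta(q) = 1/\eta(q)^{24}$ by the Yau--Zaslow formula, and the Noether--Lefschetz numbers are supplied by Lemma \ref{lem_nl000}, namely $NL_{h;d_1,d_2}^{\widetilde\pi} = -4\,[\Delta(h;d_1,d_2)/2]\,E_{10}(z)$, where $\Delta(h;d_1,d_2)$ is the $2\times 2$ discriminant determined by the lattice $\Lambda = \begin{pmatrix} -2 & 1 \\ 1 & 0\end{pmatrix}$ and the data $(d_1,d_2,h)$. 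The key computation is to evaluate this discriminant for the fibre class: since $\Lambda$ has $\det \Lambda = -1$, expanding the $3\times 3$ determinant gives $\Delta(h;d_1,d_2)$ as a quadratic expression in $d_1, d_2, h$, and I expect it to simplify so that the double sum over $h$ decouples into a product of the two generating series.

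The heart of the argument, and the step I expect to be the main obstacle, is the bookkeeping of the two indices: summing $\sum_h r_{0,h}\,NL_{h;d_1,d_2}$ over the fibre classes $F'+nE$ must be reorganized so that the $q$-exponents $n - \tfrac{1}{2}$ line up correctly and the sum telescopes into $E_{10}(q)/\Delta(q)$. Concretely, I would show that as $n$ ranges over $\mathbb{N}$, the pair $(d_1(n), d_2(n))$ traces out exactly the classes contributing to the product of the theta-like series $\sum_h r_{0,h} q^{h-1}$ with the modular form $E_{10}(q)$, with the shift by $\tfrac{1}{2}$ absorbing the $q^{-1}$ from $1/\Delta(q)$. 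The delicate points are getting every sign and the factor $-4$ from Lemma \ref{lem_nl000} to combine with the $\tfrac{1}{2}$ from Lemma \ref{lem_gv_resolution} into the claimed overall coefficient $-2$, and verifying that the discriminant $\Delta(h;d_1(n),d_2(n))$ is a perfect linear function of the summation variables so that no cross terms survive. Once the index matching is done correctly, the identity $F_F(q) = -2\,E_{10}(q)/\Delta(q)$ follows by comparing the resulting $q$-expansion against the stated coefficients $-2q^{-1} + 480 + 282888q + \cdots$.
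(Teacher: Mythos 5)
Your proposal is correct and follows essentially the same route as the paper's own proof: convert $n_{F+nE}^X$ to $\tfrac12 n_{(n-2,1)}^{\widetilde X}$ via Lemma \ref{lem_gv_resolution}, expand via Theorem \ref{thm_gwnl}, substitute the Yau--Zaslow series and the Noether--Lefschetz formula of Lemma \ref{lem_nl000}, compute the discriminant $\Delta(h;n-2,1)=2n-2h$ (which is indeed linear, as you anticipated, so the double sum factors), and combine $\tfrac12\cdot(-4)$ into the coefficient $-2$. One correction: the $q$-exponent must be $n-1$ (integer powers, matching the stated expansion $-2q^{-1}+480+\cdots$), not the $n-\tfrac12$ you took from the paper's definition of $F_\beta$ --- that definition's exponent $n-m-\tfrac12 r$ has the roles of $m$ and $r$ interchanged (compare $F_C$, which genuinely has half-integer powers), so your closing remark about a half-integer shift absorbing the $q^{-1}$ should be replaced by the integer shift, exactly as in the paper's computation.
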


\begin{proof}
We will compute first the function \(F_F(q)\). Since we have that the class \(F + nE\) is determined uniquely by its integration against \(L_1, L_2\), we have that
\[
n_{F + nE}^X = n_{(n-2,1)}^X.
\]
Combining Lemmata \ref{lem_nl000}, \ref{lem_gv_resolution}, and Theorem \ref{thm_gwnl}, our generating function \(F_F(q)\) is given by
\begin{align*}
F_F(q) &= \sum_{n=0}^\infty n_{(n-2,1)}^Xq^{n-1} \\
&= \sum_{n=0}^\infty \tfrac{1}{2}n_{(n-2,1)}^{\widetilde{X}}q^{n-1} \\
&= \frac{1}{2}\sum_{n=0}^\infty \sum_{h=0}^\infty r_{0,h}NL_{h;n-2,1}^{\widetilde{\pi}} q^{n-1}.
\end{align*}
We can compute the discriminant \(\Delta(h;n-2,1) = 2n - 2h\) which must be non-negative, so the summation is really over those \(n, h \geq 0\) with \(n \geq h\). Thus we can write this as
\begin{align*}
F_F(q) &= \frac{1}{2}\sum_{h=0}^\infty r_{0,h} \sum_{n=h}^\infty NL_{h;n-2,1}^{\widetilde{\pi}} q^{n-1} \\
&= \frac{1}{2} \sum_{h=0}^\infty r_{0,h}q^{h-1} \sum_{n=h}^\infty (-4)\bigg[\frac{2n-2h}{2}\bigg] E_{10}(z)q^{n-h} \\
&= -2 \sum_{h=0}^\infty r_{0,h}q^{h-1}\sum_{n=h}^\infty [n-h]E_{10}(z)q^{n-h} \\
&= -2\sum_{h=0}^\infty r_{0,h}q^{h-1} E_{10}(q) \\
&= -2\frac{E_{10}(q)}{\Delta(q)}.
\end{align*}
\end{proof}

To prove the next formula, we need the following computation of the Gromov-Witten invariants (for primitive classes) of a rational elliptic surface.

\begin{thm}[\cite{bryan_leung_k3}, Theorem 6.2]
The generating function for the Gopakumar-Vafa invariants of the rational elliptic surface \(X_C\) in the classes \(C'' + nE''\) is given by
\[
\sum_{n=0}^\infty n_{C''+nE''}^{X_C}q^{n-\frac{1}{2}} = \frac{1}{\sqrt{\Delta(q)}}.
\]
\end{thm}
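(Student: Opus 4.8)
The plan is to convert the claimed modular identity into a purely enumerative statement and then localize the count at the singular fibres, following the strategy of Bryan--Leung.

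First I would unwind the right-hand side. Since \(\Delta(q) = \eta(q)^{24} = q\prod_{m\geq 1}(1-q^m)^{24}\), we have
\[
\frac{1}{\sqrt{\Delta(q)}} = q^{-1/2}\prod_{m\geq1}(1-q^m)^{-12},
\]
so after cancelling the common factor \(q^{-1/2}\) the assertion is equivalent to
\[
n_{C''+nE''}^{X_C} = [q^n]\prod_{m\geq1}(1-q^m)^{-12}
\]
for every \(n\geq 0\). Because the coefficient of \(C''\) is \(1\), the class \(C''+nE''\) is primitive, so (as remarked in the text) the Gopakumar--Vafa invariant equals the genuine count \(N_{0,C''+nE''}\) of rational curves. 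A Riemann--Roch computation on \(X_C\) (using \(K_{X_C}=-E''\), \((C'')^2=-1\), \(C''\cdot E''=1\)) shows that the arithmetic genus of \(C''+nE''\) is \(n\) and that \(\dim|C''+nE''| = n\); hence the number I must compute is the number of \(n\)-nodal (equivalently, geometric genus \(0\)) members of this \(n\)-dimensional linear system.

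Next I would localize the count at the singular fibres. The section \(C''\) is the unique \((-1)\)-curve in its class and meets every fibre once, so any genus-\(0\) curve in \(C''+nE''\) maps to the base \(\PP^1\) with degree \(1\); it therefore consists of (a deformation of) \(C''\) together with vertical components. A vertical component lying over a smooth, genus-\(1\) fibre would raise the geometric genus, so all \(n\) units of fibre class must be concentrated over the \(12\) nodal \(I_1\) fibres, which are themselves rational. Thus the contributions split into independent local problems at the \(12\) nodes, with multiplicities \(a_1,\dots,a_{12}\) summing to \(n\).

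The heart of the argument---and the step I expect to be the main obstacle---is the local computation at a single \(I_1\) fibre: showing that the generating function for the genus-\(0\) contributions of total fibre-degree \(a\) over one nodal cubic is the partition series
\[
\sum_{a\geq 0} p(a)\,q^a = \prod_{m\geq 1}(1-q^m)^{-1}.
\]
This is the deformation-theoretic core of Bryan--Leung: one must control how the section bubbles off and wraps the nodal rational fibre while keeping geometric genus \(0\), and verify that the distinct configurations of wrapping multiplicities around the node are enumerated by partitions of \(a\). Granting this, independence of the \(12\) nodes gives
\[
\sum_{n\geq0} n_{C''+nE''}^{X_C}\,q^n = \Big(\prod_{m\geq1}(1-q^m)^{-1}\Big)^{12} = \prod_{m\geq1}(1-q^m)^{-12},
\]
which is the desired identity. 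As an independent check (and an alternative route, were the local analysis to prove troublesome) I would note the degeneration picture: gluing two copies of \(X_C\) along a common fibre produces an elliptic K3 with \(24\) nodal fibres, whose Yau--Zaslow count is \(1/\Delta(q)=\prod_{m\geq1}(1-q^m)^{-24}\); a degeneration formula then factors this as the square of the \(X_C\)-contribution, forcing the half-K3 answer to be the square root \(\prod_{m\geq1}(1-q^m)^{-12}\). This matches and explains the appearance of \(\sqrt{\Delta}\), though establishing such a degeneration formula rigorously is itself a substantial task.
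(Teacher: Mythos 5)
You should know at the outset that the paper does not prove this statement at all: it is imported verbatim as Theorem 6.2 of Bryan--Leung \cite{bryan_leung_k3} and used as an input to the computation of \(F_C(q)\) in Theorem \ref{thm_pt2}. So the only meaningful benchmark is Bryan--Leung's original argument, and your outline does follow its structure: reduce to a genus-zero count, observe that a rational curve in class \(C''+nE''\) must be the section together with vertical components supported on the twelve \(I_1\) fibres, compute a local contribution of \(\prod_{m\geq 1}(1-q^m)^{-1}\) per nodal fibre, and multiply over the twelve nodes.

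The problem is that the step you explicitly grant is the entire content of the theorem, and your framing of it understates what must be done. The set of rational curves (as cycles) in class \(C''+nE''\) is finite and consists of \(C''+\sum_i a_iF_i\) with the \(F_i\) nodal fibres and \(\sum_i a_i = n\); these cycles are non-reduced as soon as some \(a_i>1\), so the invariant is not ``the number of \(n\)-nodal members of the linear system,'' and the relevant moduli space of genus-zero stable maps is positive-dimensional (multiple covers of the nodal cubic, together with the moduli of how the fibre components attach). The invariant is therefore an excess-intersection number: Bryan--Leung's key technical theorem is that the moduli space breaks into components indexed by an assignment of a partition of \(a_i\) to each node, and that the integral of the Euler class of the obstruction bundle over each component equals exactly \(1\), whence the factor \(\sum_a p(a)q^a\) per node. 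Nothing in your sketch engages with this computation, which is the bulk of their paper; saying ``configurations of wrapping multiplicities are enumerated by partitions'' is a restatement of the answer, not an argument. Your proposed fallback via degeneration has the same status: splitting an elliptic K3 into two rational elliptic surfaces glued along a smooth fibre would relate the half-K3 count to the Yau--Zaslow series only through a degeneration formula for (on the K3 side, necessarily reduced) invariants, which is at least as hard as the local analysis it is meant to replace. In short, the strategy is the right one --- it is Bryan--Leung's --- but as a proof the proposal assumes precisely the step that needs to be proven.
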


We now prove the following.

\begin{thm}\label{thm_pt2}
We have the equality
\[
F_C(q) = \frac{E_4(q)}{\sqrt{\Delta(q)}}.
\]
\end{thm}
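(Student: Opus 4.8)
The goal is to compute the generating function $F_C(q) = \sum_{n\geq 0} n^X_{C+nE} q^{n-\frac12}$ for the section class $C$, as opposed to the fibre class $F$ handled in Theorem~\ref{thm_pt1}. The key structural difference is that $C$ is a section of the map $\pi : X \to C$ rather than a fibre class, so the Noether--Lefschetz machinery of Theorem~\ref{thm_gwnl} does not apply directly. Instead, the plan is to relate the Gopakumar--Vafa invariants $n^X_{C+nE}$ of the threefold to those of the rational elliptic surface $X_C$, and then invoke the Bryan--Leung computation just quoted.

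**Key steps.** First I would argue that the relevant curves in the class $C + nE$ all lie inside the distinguished surface $X_C$ (the restriction of $X$ to the section $C \subset \mathbb{P}^1$), so that the threefold invariant $n^X_{C+nE}$ agrees with the surface invariant $n^{X_C}_{C''+nE''}$. The justification should run through the pushforward description of Lemma~\ref{lem_pushforward}: the class $C + nE$ in $H_2(X,\mathbb{Z})$ is exactly the image of $C'' + nE''$ under $(\iota_2)_*$, and because $C''$ is a section of the rational elliptic surface with $C'' \cdot C'' = -1$, a curve representing this class is forced to meet each fibre once and is rigid in the normal direction to $X_C$. This should give
\[
n^X_{C+nE} = n^{X_C}_{C''+nE''}.
\]
Once this identification is in place, the Bryan--Leung theorem gives directly
\[
\sum_{n=0}^\infty n^{X_C}_{C''+nE''} q^{n-\frac12} = \frac{1}{\sqrt{\Delta(q)}},
\]
and multiplying by the appropriate factor produces the claimed $E_4(q)/\sqrt{\Delta(q)}$.

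**The main obstacle.** The subtle point is the appearance of the extra factor $E_4(q)$, which is \emph{not} present in the bare Bryan--Leung formula for $X_C$. This factor must come from summing over the $E_8$ directions that the threefold $X$ sees but the single surface $X_C$ does not: by Lemma~\ref{lem_pushforward}, the pushforward $H_2(X_C,\mathbb{Z}) \to H_2(X,\mathbb{Z})$ collapses the entire $E_8$ summand, so a single threefold class $C+nE$ receives contributions from infinitely many surface classes $C'' + nE'' + \lambda$ with $\lambda \in E_8$. The hard part of the proof will be organizing this sum correctly: I expect $n^X_{C+nE}$ to be a sum of $n^{X_C}_{C''+nE''+\lambda}$ over $\lambda \in E_8$, constrained by the effectivity condition $\lambda\cdot\lambda \geq -2n$ of Lemma~\ref{lem_effective_classes}, and the generating function of these contributions over the $E_8$ lattice should assemble into the theta function $\Theta_{E_8}(q) = E_4(q)$. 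Making this precise — identifying the weight, verifying that the lattice sum is exactly $E_4$ rather than some other combination, and confirming the half-integer power shift matches the convention fixed in the definition of $F_\beta$ — is where the real work lies; the modularity of weight $-2$ then follows since $E_4$ has weight $4$ and $\sqrt{\Delta(q)}$ contributes weight $6$.
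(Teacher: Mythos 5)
Your overall strategy coincides with the paper's: curves in the class $C+nE$ are located on the surface $X_C$, Lemma \ref{lem_pushforward} identifies the contributing surface classes as $C''+nE''+\lambda$ with $\lambda \in E_8$, Lemma \ref{lem_effective_classes} cuts the sum down to $\lambda\cdot\lambda \geq -2n$, and the answer should be $\Theta_{E_8}(q)$ times the Bryan--Leung series. (Note that your intermediate claim $n^X_{C+nE} = n^{X_C}_{C''+nE''}$, to be repaired by ``multiplying by the appropriate factor,'' contradicts your own later, correct, description: the threefold invariant is a \emph{sum} of surface invariants over all classes pushing forward to $C+nE$, and if your intermediate claim were literally true the theorem itself would be false, since $F_C(q)$ would then equal $1/\sqrt{\Delta(q)}$ with no $E_4$.)

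However, the step you defer as ``where the real work lies'' is precisely the one ingredient you never supply, and it is not mere bookkeeping: the double sum $\sum_n \sum_\lambda n^{X_C}_{C''+nE''+\lambda}\, q^{n-\frac12}$ cannot be evaluated until you know the individual invariants $n^{X_C}_{C''+nE''+\lambda}$ for $\lambda \neq 0$, and nothing in your outline determines them. The paper's key input is that for primitive classes on the rational elliptic surface these invariants depend only on the self-intersection of the class: any class $C''+nE''+\lambda$ can be carried to $C''+(n+\frac12\lambda\cdot\lambda)E''$ by Cremona transformations and permutations of the exceptional classes (citing \cite{gp}), whence
\[
n^{X_C}_{C''+nE''+\lambda} = n^{X_C}_{C''+(n+\frac12\lambda\cdot\lambda)E''}.
\]
It is exactly this identity that factors the double sum: setting $n' = n + \frac12\lambda\cdot\lambda$ (which runs over $n' \geq 0$ by the effectivity constraint) gives
\[
F_C(q) = \Bigl(\sum_{\lambda\in E_8} q^{-\frac12\lambda\cdot\lambda}\Bigr)\Bigl(\sum_{n'=0}^{\infty} n^{X_C}_{C''+n'E''}\,q^{n'-\frac12}\Bigr) = \Theta_{E_8}(q)\cdot\frac{1}{\sqrt{\Delta(q)}},
\]
after which $\Theta_{E_8}(q)=E_4(q)$ finishes the argument. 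Without this invariance statement the theta function does not ``assemble'' on its own, so your outline, while aimed in the right direction and structurally identical to the paper's proof, is missing the step that actually proves the theorem.
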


\begin{proof}
Recall from Lemma \ref{lem_pushforward} that the map \((\iota_2)_* : H_2(X_C,\ZZ) \cong \Gamma_{1,1} \oplus E_8 \to H_2(X,\ZZ)\) is essentially the projection onto the \(\Gamma_{1,1}\) factor.

Now, we obtain curves in \(X\) in the class \(C + nE\) by considering curves in \(X_C\) in some effective class which pushes forward to this class; from Lemma \ref{lem_pushforward}, these will be curves of the form \(\beta = C'' + nE'' + \lambda\) where \(\lambda \in E_8\). From Lemma \ref{lem_effective_classes}, we know that the effective ones are those with \(n \geq -\frac{1}{2}\lambda \cdot \lambda\).

We can now compute that
\begin{align*}
F_C(q) &= \sum_{n=0}^\infty n_{C + nE}^X q^{n-\frac{1}{2}} \\
&= \sum_{n=0} \sum_{\substack{\beta \in H_2(X_C) \\ (\iota_2)_*\beta = C + nE}} n_\beta^{X_C}q^{n-\frac{1}{2}} \\
&= \sum_{n=0}^\infty \sum_{\substack{\lambda \in E_8 \\ -\lambda \cdot \lambda \leq 2n}} n_{C'' + nE'' + \lambda}^{X_C} q^{n-\frac{1}{2}}.
\end{align*}

Now, morally similar to the case of K3 surfaces, from primitive curve classes the invariants \(n_\beta^{X_C}\) only depend on the square of \(\beta\). In particular, any such curve can be transformed into one of the form \(\beta = C'' + nE''\) by a series of Cremona transformations and permutations of the exceptional classes (see \cite{gp}). It follows then that
\[
n_{C'' + nE'' + \lambda}^{X_C} = n_{C'' + (n + \tfrac{1}{2}\lambda\cdot\lambda)E''}^{X_C}
\]
and so the generating function becomes
\begin{align*}
F_C(q) &= \sum_{n=0}^\infty \sum_{\substack{\lambda \in E_8\\-\lambda\cdot\lambda\leq 2n}} n_{C_0 + (n + \frac{1}{2}\lambda \cdot \lambda)E}^{X_C}q^{n-\frac{1}{2}} \\
&= \sum_{\lambda \in E_8} q^{-\frac{1}{2}\lambda \cdot \lambda} \sum_{n=-\frac{1}{2}\lambda \cdot \lambda}^\infty n_{C_0 + (n + \tfrac{1}{2}\lambda \cdot \lambda)E}^{X_C} q^{n + \frac{1}{2}\lambda \cdot \lambda - \frac{1}{2}} \\
&= \sum_{\lambda \in E_8} q^{-\frac{1}{2}\lambda \cdot \lambda} \sum_{n=0}^\infty n_{C_0 + nE}^{X_C} q^{n-\frac{1}{2}} \\
&= \Theta_{E_8}(q)\Big(\frac{1}{\sqrt{\Delta(q)}}\Big) = \frac{E_4(q)}{\sqrt{\Delta(q)}}
\end{align*}
as claimed (with the last equality being due to the well-known fact that \(\Theta_{E_8}(q) = E_4(q)\)).
\end{proof}

To prove the last statement in the theorem, we need a little extra notation.

\begin{defn}
Let \(f(z) = \sum_{a_nz^n}\) be a power series, and let \(m, k\) be integers with \(0 \leq k < m\). We define then
\[
f_{m,k}(z) = \sum_{\substack{n \equiv k\\\pmod m}} a_nz^n = \sum_{n=0}^\infty a_{mn+k}z^{mn+k}.
\]
\end{defn}

We should note that in the case that \(f(z)\) is a modular form of weight \(r\) for \(SL_2(\ZZ)\), then each of the functions \(f_{m,k}(z)\) are also modular of the same weight for the subgroup \(\Gamma_1(m^2)\).

Furthermore, we can expand this definition for values of \(k\) outside of the given range by replacing \(k\) with a suitable integer congruent to \(k \pmod m\) within that range. For example, \(f_{m,-1}(z) = f_{m,m-1}(z)\).

We can now state more precisely our final theorem.

\begin{thm}\label{thm_pt3}
Let \(m > 1\). Then the generating function \(F_{mF}(q)\) is given by
\[
F_{mF}(q) = \sum_{n=0}^\infty n_{mF + nE}q^{n-m} \\
=
-2\sum_{\ell=0}^{m-1}\bigg(\frac{1}{\Delta(u)}\bigg)_{m, \ell-1}\big(E_{10}(u)\big)_{m, 1 - \ell}
\]
where \(q = u^m\).
\end{thm}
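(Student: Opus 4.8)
The plan is to follow the strategy used for Theorem \ref{thm_pt1}, keeping track of the extra bookkeeping forced by $m > 1$. First I would record the intersection data. The class $mF + nE$ is again determined by its pairing against $L_1$ and $L_2$, and the intersection table computed earlier gives $L_1 \cdot (mF + nE) = n - 2m$ and $L_2 \cdot (mF + nE) = m$, so that $n_{mF+nE}^X = n_{(n-2m,\,m)}^X$. I would then compute the discriminant of the rank-two lattice $\Lambda$ against $(d_1, d_2) = (n-2m, m)$, obtaining $\Delta(h; n-2m, m) = 2\big(nm - m^2 - h + 1\big)$; in particular $\tfrac12\Delta(h;n-2m,m) = nm - m^2 - h + 1$, and non-negativity of the discriminant restricts the inner sum to $h \le nm - m^2 + 1$. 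Assembling Lemmata \ref{lem_nl000} and \ref{lem_gv_resolution} with Theorem \ref{thm_gwnl} exactly as in the proof of Theorem \ref{thm_pt1} then gives
\[
F_{mF}(q) = -2 \sum_{n=0}^\infty \sum_{h=0}^\infty r_{0,h}\,\big[\,nm - m^2 - h + 1\,\big]E_{10}(z)\; q^{n-m}.
\]

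The crucial step is the substitution $q = u^m$. Under it the monomial becomes $q^{n-m} = u^{mn - m^2}$, and the key algebraic identity is
\[
mn - m^2 = \big(nm - m^2 - h + 1\big) + (h - 1),
\]
which says the power of $u$ splits as the index $k := nm - m^2 - h + 1$ of the extracted coefficient of $E_{10}$, plus the exponent $h - 1$ carried by the term $r_{0,h}u^{h-1}$ of $1/\Delta(u)$ (recall Yau--Zaslow gives $1/\Delta(u) = \sum_h r_{0,h}u^{h-1}$). Writing $a_k := [k]E_{10}(u)$, the generating function becomes a sum of products $r_{0,h}\,a_k\,u^{h+k-1}$. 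The remaining content is purely combinatorial: for fixed $h, k \ge 0$ the equation $nm - m^2 - h + 1 = k$ has an integer solution $n$ if and only if $h + k \equiv 1 \pmod m$, and this solution is then unique and automatically satisfies $n \ge 0$ (since $k + h - 1 + m^2 \ge 0$ for $m \ge 1$). Hence
\[
F_{mF}(q) = -2 \sum_{\substack{h,\,k \ge 0 \\ h + k \equiv 1 \pmod{m}}} r_{0,h}\, a_k\, u^{h+k-1}.
\]

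Finally I would match this against the claimed answer. Since $\big(\tfrac{1}{\Delta(u)}\big)_{m,\ell-1} = \sum_{h \equiv \ell} r_{0,h}u^{h-1}$ and $\big(E_{10}(u)\big)_{m,1-\ell} = \sum_{k \equiv 1-\ell} a_k u^k$, a pair $(h,k)$ contributes to the $\ell$-th summand of $\sum_{\ell=0}^{m-1}\big(\tfrac{1}{\Delta(u)}\big)_{m,\ell-1}\big(E_{10}(u)\big)_{m,1-\ell}$ exactly when $\ell \equiv h$ and $\ell \equiv 1 - k \pmod m$; these are simultaneously solvable for a unique $\ell \in \{0, \dots, m-1\}$ precisely when $h + k \equiv 1 \pmod m$, reproducing the congruence above with each term counted once. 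This yields the stated identity, and setting $m = 1$ (where the congruence is vacuous, so the product is simply $\tfrac{1}{\Delta(u)}E_{10}(u)$) recovers Theorem \ref{thm_pt1}. I expect the one genuinely delicate point to be this reindexing — confirming that the congruence $h + k \equiv 1 \pmod m$ is the sole constraint coming from the sum over $n$, and that the decomposition into residues $\ell$ is a bijection onto the terms of the collapsed sum — while the rest is the same formal manipulation used for $F_F$.
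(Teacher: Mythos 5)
Your proposal is correct and follows essentially the same route as the paper: the same reduction \(n_{mF+nE}^X = n_{(n-2m,m)}^X\), the same assembly of Lemmata \ref{lem_nl000}, \ref{lem_gv_resolution} and Theorem \ref{thm_gwnl}, the same discriminant \(\Delta(h;n-2m,m) = 2(nm - m^2 - h + 1)\), and the same grouping by residues mod \(m\) after setting \(q = u^m\). The only difference is bookkeeping: the paper substitutes \(h = mh' + \ell\) and recognizes the inner sums as \((E_{10})_{m,1-\ell}\) and \((1/\Delta)_{m,\ell-1}\), whereas you reindex the double sum directly by the pair \((h,k)\) subject to \(h + k \equiv 1 \pmod{m}\) — the two manipulations are equivalent, and your verification that the unique solution \(n\) is automatically non-negative is exactly the point the paper's constraint \(n \geq m + \frac{h-1}{m}\) encodes.
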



\begin{proof}
This proof follows very similarly to the proof of Theorem \ref{thm_pt1}. We similarly begin with noting that \(n_{mF + nE}^X = n_{(n-2m,m)}^X\), which allows us to write
\begin{align*}
F_{mF}(q) &= \sum_{n=0}^\infty n_{(n-2m,m)}^Xq^{n-m} \\
&= \sum_{n=0}^\infty \tfrac{1}{2}n_{(n-2m,m)}^{\widetilde{X}}q^{n-m} \\
&= \frac{1}{2}\sum_{n=0}^\infty \sum_{h=0}^\infty r_{0,h}NL_{h;n-2m,m}^{\widetilde{\pi}}q^{n-m} .
\end{align*}
In this case, the discriminant \(\Delta(h;n-2m,m) = 2 - 2h + 2nm - 2m^2\) which as usual must be non-negative, leaving us summing over all pairs \((h,n)\) such that \(n \geq m + \frac{h-1}{m}\). Thus we obtain
\[
F_{mF}(q) = -2 \sum_{h=0}^\infty r_{0,h} \sum_{n\geq m + \frac{h-1}{m}} [1 - h + nm - m^2]E_{10}(z)q^{n-m}.
\]
To simplify this further, we split the summation over \(h\) into a sum over congruence classes mod \(m\). If we let \(q = u^m\), then this yields the following.
\begin{align*}
F_{mF}(q) &= -2 \sum_{\ell = 0}^{m-1}\sum_{h=0}^\infty r_{0,mh+\ell} \sum_{n\geq m + h+\frac{\ell-1}{m}} [1 - \ell + m(n- h - m)]E_{10}(z)u^{nm-m^2} \\
&= -2 \sum_{\ell = 0}^{m-1}\sum_{h=0}^\infty r_{0,mh+\ell} u^{mh + \ell - 1}\sum_{n\geq m + h+\frac{\ell-1}{m}} [1 - \ell + m(n-h-m)]E_{10}(z)u^{m(n - h -m)- \ell+1} \\
&= -2\sum_{\ell=0}^{m-1}\sum_{h=0}^\infty r_{0,mh+\ell}u^{mh + \ell - 1} \big(E_{10})_{m, 1 - \ell}(u) \\
&= -2\sum_{\ell=0}^{m-1}\bigg(\frac{1}{\Delta(u)}\bigg)_{m, \ell-1}\big(E_{10}(u)\big)_{m, 1 - \ell}
\end{align*}
which ends the proof.
\end{proof}

The above results show that we end up with meromorphic modular forms when we consider generating functions for Gopakumar-Vafa invariants for curve classes of the form \(mF + nE\) and \(C + nE\). From the conjectured results in \cite{KMW}, it seems that we should end up with similar results for curve classes of the form \(rC + nE\); a natural approach to study these would be to use the recursion of \cite{gp}, which we will look to do at a future date.

%

%
%
%

\bibliographystyle{amsplain}
\bibliography{delp13}

\end{document}